\newcommand{\C}{\mathbb{C}}
\newcommand{\Z}{\mathbb{Z}}
\newcommand{\R}{\mathbb{R}}
\newcommand{\Q}{\mathbb{Q}}
\newcommand{\F}{\mathbb{F}}
\newcommand{\A}{\mathbb{A}}
\newcommand{\Gal}{\operatorname{Gal}}
\newcommand{\Ad}{\operatorname{Ad}}
\newcommand{\ad}{\operatorname{ad}}
\newcommand{\Spin}{\mathrm{Spin}}
\newcommand{\SO}{\mathrm{SO}}
\newcommand{\U}{\mathrm{U}}
\newcommand{\SU}{\mathrm{SU}}
\newcommand{\GL}{\mathrm{GL}}
\newcommand{\PGL}{\mathrm{PGL}}
\newcommand{\SL}{\mathrm{SL}}
\newcommand{\PSL}{\mathrm{PSL}}
\newcommand{\Sp}{\mathrm{Sp}}
\newcommand{\PSp}{\mathrm{PSp}}
\newtheorem{thm}{Theorem}[section]
\newtheorem{cor}[thm]{Corollary}
\newtheorem{prop}[thm]{Proposition}
\newtheorem{lem}[thm]{Lemma}
\theoremstyle{definition}
\newtheorem{definition}{Definition}
\begin{document}

\title{The Waring problem for Lie groups and Chevalley groups}

\author{Chun Yin Hui, Michael Larsen, Aner Shalev}

\address{Chun Yin Hui,
Einstein Institute of Mathematics,
Hebrew University, Givat Ram,
Jerusalem 91904, Israel}

\address{Michael Larsen,
Department of Mathematics,
Indiana University,
Bloomington, IN
47405,
U.S.A.}

\address{Aner Shalev,
Einstein Institute of Mathematics,
Hebrew University, Givat Ram,
Jerusalem 91904, Israel}

\thanks{The first author was supported by ERC Advanced Grant no. 247034.
The second author was partially supported by the Simons Foundation, the MSRI, 
the NSF, and BSF Grant no. 2008194. The third author was partially supported 
by ERC Advanced Grant no. 247034, ISF grant no. 1117/13, BSF Grant no. 2008194 
and the Vinik Chair of Mathematics which he holds}

\begin{abstract}
The classical Waring problem deals with expressing every natural 
number as a sum of $g(k)$ $k^{\mathrm {th}}$ powers. Similar problems 
were recently studied in group theory, where we aim to present group elements
as short products of values of a given word $w \ne 1$.
In this paper we study this problem for Lie groups and Chevalley groups
over infinite fields.

We show that for a fixed word $w \ne 1$ and for a classical connected 
real compact Lie group $G$ of sufficiently large rank we have $w(G)^2=G$, 
namely every element of $G$ is a product of $2$ values of $w$.

We prove a similar result for non-compact Lie groups of arbitrary rank,
arising from Chevalley groups over $\R$ 
or over a $p$-adic field. We also study this problem for 
Chevalley groups over arbitrary infinite fields, and show in particular
that every element in such a group is a product of two squares.
\end{abstract}

\maketitle

\newpage

\section{Introduction}

Let $F_d$ be the free group on $x_1, \ldots , x_d$ and let 
$w = w(x_1, \ldots , x_d) \in F_d$ be a word. For every
group $G$ there is a word map $w = w_G: G^d \rightarrow G$
obtained by substitution. The image of this map is denoted
by $w(G)$. The theory of word maps has developed
significantly in the past decade; see \cite{La,Sh1,S,LaSh1,LaSh2,LOST,LST,AGKSh,Sh2} and the references therein.

A major goal in these investigations is to prove theorems of ``Waring type'', i.e.,
to find small $k$ such that, 
for every word $w \ne 1$ and for various groups $G$ we have $w(G)^k = G$,
namely every element of $G$ is a product of $k$ values of $w$.

A theorem of Borel \cite{Bo1} 
states that if $w$ is a non-trivial word 
then the word map it induces on simple algebraic groups $G$ is
dominant. Thus $w(G)$ contains a dense open subset, which
easily implies $w(G(F))^2 = G(F)$ where $F$ is an algebraically
closed field (see Corollary 2.2 in \cite{Sh2}). However, much
more effort is required in order to prove similar results for
fields $F$ (finite or infinite) which are not algebraically closed.

In \cite{Sh1} it is shown that, fixing $w \ne 1$, we have
$w(G)^3 = G$ for all sufficiently large (nonabelian) finite simple 
groups $G$. This is improved in \cite{LaSh1, LaSh2, LST}
to $w(G)^2 = G$. Results of type $w(G)^3 = G$ were recently
obtained in \cite{AGKSh} for $p$-adic groups $G(\Z_p)$.

The purpose of this paper is to study similar problems
for Lie groups and for infinite Chevalley groups. 
Our main results are as follows.

\begin{thm} 
\label{T11}
For every two non-trivial words $w_1, w_2$ there
exists $N = N(w_1, w_2)$ such that if $G$ is a classical
connected real compact Lie group of rank at least $N$ then
\[
w_1(G)w_2(G) = G.
\]
\end{thm}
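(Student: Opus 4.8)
The plan is to reduce the whole statement to a single measure estimate for each word image and then close with a pigeonhole argument. We may assume $G$ is a simple classical compact group ($\SU(n)$, $\SO(n)$, or $\Sp(n)$), the remaining cases being central quotients to which the same analysis applies. Let $\mu$ denote normalized Haar measure on $G$. The first point is that each image $w_i(G)$ is a closed, conjugation-invariant subset: closed because $G^d$ is compact and $w_i$ continuous, conjugation-invariant because $w_i(g x_1 g^{-1}, \dots, g x_d g^{-1}) = g\, w_i(x_1,\dots,x_d)\, g^{-1}$. The decisive elementary input is then: if $A, B \subseteq G$ are measurable with $\mu(A) + \mu(B) > 1$, then $AB = G$, since for any $g \in G$ the sets $A$ and $gB^{-1}$ cannot be disjoint, giving $a = gb^{-1}$, i.e. $g = ab$. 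As Haar measure on a compact group is inversion-invariant, $\mu(B^{-1}) = \mu(B)$, so this applies to $A = w_1(G)$ and $B = w_2(G)$. Thus it suffices to prove that $\mu(w_i(G)) \to 1$ as $\rk(G) \to \infty$ for each fixed nontrivial word; choosing $N$ so large that both measures exceed $\tfrac12$ for $\rk(G) \ge N$ then yields the theorem.

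To carry out the measure estimate I would pass to harmonic analysis on $G$. By Borel's theorem the complexified word map $w_{G_\C}\colon G_\C^d \to G_\C$ is dominant, so the locus $Z \subseteq G^d$ where the differential of $w$ fails to be surjective is a proper real-analytic subset and hence $\mu^{\times d}(Z) = 0$. On the complement $w$ is a submersion, so the word measure $\nu_w = (w_G)_\ast \mu^{\times d}$ is absolutely continuous with respect to $\mu$, with density $f_w \in L^1(G)$ a class function vanishing off $w(G)$. Expanding $f_w$ in the orthonormal basis of irreducible characters gives $\|f_w\|_{L^2}^2 = \sum_{\rho \in \operatorname{Irr}(G)} |a_\rho|^2$ (valued in $[0,\infty]$), where $a_\rho = \int_{G^d} \chi_\rho(w(\mathbf{x}))\, d\mu^{\times d}$ and $a_{\mathbf 1} = 1$. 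Once we know $\|f_w\|_{L^2} < \infty$, Cauchy–Schwarz yields $1 = \int_{w(G)} f_w\, d\mu \le \|f_w\|_{L^2}\, \mu(w(G))^{1/2}$, that is $\mu(w(G)) \ge \|f_w\|_{L^2}^{-2}$. Everything therefore reduces to showing $\sum_{\rho \neq \mathbf 1} |a_\rho|^2 \to 0$ as $\rk(G) \to \infty$.

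Two ingredients combine to give this decay. First, a uniform character estimate for word maps of the shape $|a_\rho| \le (\dim\rho)^{-\delta}$ for some $\delta = \delta(w) > 0$ independent of $\rho$; for the commutator word the Frobenius formula gives the exact value $a_\rho = (\dim\rho)^{-1}$, and the general word requires the analogous, harder bound. Second, the representation growth of classical compact groups: the smallest dimension of a nontrivial irreducible representation of a classical group of rank $r$ tends to infinity, so the Witten-type zeta function $\zeta^G(s) = \sum_\rho (\dim\rho)^{-s}$ satisfies $\zeta^G(s) - 1 \to 0$ as $r \to \infty$ for each fixed $s > 0$. Putting these together, $\sum_{\rho \neq \mathbf 1} |a_\rho|^2 \le \zeta^G(2\delta) - 1 \to 0$, which shows $f_w \in L^2$ with $\|f_w\|_{L^2}^2 \to 1$, and hence $\mu(w(G)) \to 1$ as required.

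The main obstacle is exactly the uniform character estimate $|a_\rho| \le (\dim\rho)^{-\delta}$ for an arbitrary nontrivial word. It is the genuine analogue of classical character-sum bounds and is most delicate for representations of small dimension, where $\dim\rho$ provides little room; one must produce an explicit exponent $\delta(w)$ valid uniformly across the families $\SU$, $\SO$, and $\Sp$. Proving this bound, and then matching its decay rate against the rank-dependence of $\zeta^G$ so that $\mu(w_1(G)) + \mu(w_2(G))$ exceeds $1$ uniformly, is the technical heart of the argument and is what ultimately fixes the threshold $N(w_1, w_2)$.
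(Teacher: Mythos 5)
Your argument takes a genuinely different route from the paper, but it contains a gap that is not a technicality: the uniform character estimate $|a_\rho|\le(\dim\rho)^{-\delta(w)}$ for the Fourier coefficients $a_\rho=\int_{G^d}\chi_\rho(w(\mathbf{x}))\,d\mu^{\times d}$ of the word measure is asserted, not proved, and it is not a result you can quote. You correctly identify it as ``the technical heart,'' but that is precisely the point at which the proof stops being a proof. The Frobenius formula handles the single commutator word, and there are results on decay of individual $a_\rho$ for word measures in various settings, but a bound with an exponent $\delta(w)>0$ uniform over \emph{all} irreducible representations of $\SU(n)$, $\SO(n)$, $\Sp(n)$ simultaneously --- strong enough to be summed against the Witten zeta function --- is an open problem, not an input. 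Without it you do not even know that the density $f_w$ lies in $L^2$, so the Cauchy--Schwarz step and the conclusion $\mu(w(G))\to 1$ are unsupported. (Everything upstream of that point --- the reduction to simply connected groups, the pigeonhole fact that $\mu(A)+\mu(B)>1$ forces $AB=G$, the absolute continuity of $\nu_w$ via Borel's theorem and Sard-type reasoning --- is fine, but it only reduces the theorem to a harder statement.)

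For contrast, the paper avoids measure theory entirely. It uses Got\^o's theorem: there is an element $x$ normalizing a maximal torus $T$ with $\mathrm{Ad}(x)-1$ invertible on $\mathrm{Lie}(T)$, so that every element of $G$ is conjugate to $xtx^{-1}t^{-1}$ for some $t\in T$; hence it suffices to put $x$ in $w_1(G)$ and $x^{-1}$ in $w_2(G)$. This is done by exhibiting $x$ (up to conjugacy) as the image of a root of unity $\zeta_{2n}$ under the principal homomorphism $\SU(2)\to G$, combined with the elementary fact that $w_i(\SU(2))\cap S^1$ is a symmetric closed arc around $1$ and therefore contains $\zeta_{2n}^{\pm 1}$ once $n$ is large --- which is exactly where the large-rank hypothesis enters. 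If you want to salvage your approach, you would need to either prove the character bound or replace the $L^2$ argument with something that exploits the specific structure of $w(G)$ as a conjugation-invariant set; as written, the proposal establishes the theorem only conditionally.
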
 

In particular, for any $w \ne 1$ there is $N = N(w)$ such
that $w(G)^2 = G$ for all classical connected real compact
Lie groups of rank at least $N$.

We note that the assumption that the rank of $G$ is large
is necessary. 
By a theorem of E.~Lindenstrauss (private communication) and 
A.~Thom  \cite[Cor.~1.2]{T},
for any $n \ge 2$ and $\epsilon > 0$ there
exists a word $1 \ne w \in F_2$ such that all elements of 
$w(\U(n))$ have distance $\le \epsilon$ from the identity.  
Embedding a given $G$ in $\U(n)$, we can arrange 
that $w(G)^2 \neq G$ or even $w(G)^k \neq G$ for any fixed $k$.
We can also find a sequence $\{ w_i \}$ of
non-trivial words in two variables such that, for every compact group $G$,
$w_i(G)$ converges to $1$.

We also establish a width $2$  result  for non-compact Lie groups
which arise from Chevalley groups over $\R$ or over a $p$-adic field.
Here a Chevalley group over a field $F$ means a group generated by 
the root groups $X_\alpha(F)$ associated to a faithful representation of a complex semisimple Lie algebra (see \cite[$\mathsection3$]{St}),
or equivalently, the commutator subgroup of 
$G_F(F)$, where $G_F$
is a split semisimple algebraic group over $F$. In this case there is no large rank assumption.

\begin{thm} 
\label{T12}
Let $F$ be a field that contains either $\R$ or $\Q_p$ for 
some prime number $p$. Let $w_1$,$w_2$ be non-trivial words and $G$ 
a Chevalley  group over $F$ associated to a complex simple Lie algebra. Then
\[
G\setminus Z(G)\subset w_1(G)w_2(G).
\]
In particular, if $Z(G) = \{ 1 \}$, then $w_1(G)w_2(G) = G$.
\end{thm}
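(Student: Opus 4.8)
The plan is to exploit three ingredients: Borel's dominance theorem, the conjugation-invariance of the sets $w_i(G)$, and the implicit function theorem over the local field $F$. Since $F$ contains $\R$ or $\Q_p$, I would first reduce to the case where $F$ is itself $\R$ or $\Q_p$, the passage to a general overfield being handled by a standard specialization argument. The key elementary observation is that $w_1(G)w_2(G)$ is invariant under conjugation, because $h\,w_1(\vec x)w_2(\vec y)\,h^{-1}=w_1(h\vec x h^{-1})w_2(h\vec y h^{-1})$. Consequently it suffices, for each non-central $g$, to place a single representative of the conjugacy class of $g$ into $w_1(G)w_2(G)$.

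Let $\chi\colon G\to T/W$ denote the adjoint quotient (Steinberg map), recording the invariants of a conjugacy class. By Borel's theorem each word map $w_i\colon G^d\to G$ is dominant with smooth generic fibres, so using that $G^d(F)$ is Zariski dense in $G^d$ (as $G$ is split and $F$ is infinite) there is an $F$-point $\vec x_i\in G^d(F)$ at which $dw_i$ is surjective. Setting $t_i=w_i(\vec x_i)$, the implicit function theorem shows that $w_i(G(F))$ contains an analytic neighbourhood $V_i$ of $t_i$, which we may take to consist of regular semisimple elements. Since $w_i(G)$ is conjugation-invariant it then contains the entire conjugacy class of each regular semisimple point of $V_i$, so in terms of invariants
\[
w_i(G)\ \supseteq\ \{\,x\in G(F): x\ \text{regular semisimple},\ \chi(x)\in\Omega_i\,\},
\]
where $\Omega_i=\chi(V_i)$ is a non-empty open subset of $(T/W)(F)$.

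The heart of the argument is the geometric claim that, for $g\notin Z(G)$, the morphism
\[
\Psi_g\colon G\longrightarrow (T/W)\times(T/W),\qquad x\longmapsto\bigl(\chi(x),\,\chi(x^{-1}g)\bigr),
\]
is dominant. Granting this, $\Psi_g$ is smooth on a dense open set, so by the implicit function theorem $\Psi_g(G(F))$ has non-empty interior; deforming the $F$-point $(\chi(t_1),\chi(t_2))\in\Omega_1\times\Omega_2$ through smooth values and using Zariski density of $G(F)$, one produces $x\in G(F)$ with $\chi(x)\in\Omega_1$ and $\chi(x^{-1}g)\in\Omega_2$, both $x$ and $x^{-1}g$ regular semisimple. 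Writing $x_1=x$ and $x_2=x^{-1}g$, the displayed inclusion gives $x_1\in w_1(G)$ and $x_2\in w_2(G)$, whence $g=x_1x_2\in w_1(G)w_2(G)$.

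The remaining task, and the step I expect to be the main obstacle, is the dominance of $\Psi_g$. A dimension count is encouraging, since $\dim G=\#\Phi+\rk(G)\ge 2\,\rk(G)=\dim\bigl((T/W)^2\bigr)$, leaving room for dominance; moreover one checks that for central $g$ the two coordinates of $\Psi_g$ become functionally dependent, as $\chi(x^{-1}g)$ is then determined by $\chi(x)$, so the map degenerates precisely in the excluded case. To prove surjectivity of the differential of $\Psi_g$ at a suitable regular semisimple point I would either compute it directly or reduce to the rank-one situation along an appropriate $\SL_2$ or $\PGL_2$ and patch, the underlying mechanism being that a product of two regular semisimple conjugacy classes covers a Zariski-dense set whenever the target is non-central. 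Finally, targets $g$ that are not regular semisimple (for instance unipotent elements) should be handled by a limiting argument, choosing $\Omega_1,\Omega_2$ so that $g$ lies in the interior of $\chi^{-1}(\Omega_1)^{\mathrm{rs}}\cdot\chi^{-1}(\Omega_2)^{\mathrm{rs}}$ and invoking openness of the multiplication map on regular semisimple classes.
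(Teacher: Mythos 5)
Your overall architecture (conjugation-invariance of $w_i(G)$, implicit function theorem to get open sets of values, then a covering statement for a product of two conjugacy classes) is the right shape, but the proposal has a genuine gap exactly where you flag "the main obstacle," and the gap is worse than you suggest. The paper closes it with the Ellers--Gordeev theorem (Gauss decomposition with prescribed semisimple part, stated as Theorem 3.1): for \emph{any} two conjugacy classes $C_1,C_2$ of regular semisimple elements lying in a maximal \emph{split} torus, $G\setminus Z(G)\subseteq C_1C_2$ over an arbitrary field. This is an exact, rational covering statement, and once you have it the only remaining task is to exhibit a single split regular semisimple element in each $w_i(G)$. The paper does this by pushing $w_i(\SL_2(F))$ through the principal homomorphism $\SL_2\to G$, using Lemma 3.2 (the discriminant $(\mathrm{Tr}\circ w)^2-4$ takes infinitely many nonzero square values, via a curve argument and the real or $p$-adic implicit function theorem) to guarantee the eigenvalues lie in $F$, i.e.\ that the element is split.

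Your substitute --- dominance of $\Psi_g\colon x\mapsto(\chi(x),\chi(x^{-1}g))$ --- does not suffice even if you could prove it. Dominance plus smoothness gives that $\Psi_g(G(F))$ contains \emph{some} non-empty open subset of $(T/W)^2(F)$, but you need it to meet the \emph{specific} open set $\Omega_1\times\Omega_2$ produced earlier, and over $\R$ or $\Q_p$ the image of a dominant morphism on $F$-points can miss large open regions (already the squaring map on $\mathbb{G}_m(\R)$ misses the negative reals). Zariski density of $G(F)$ does not repair this: a Zariski-dense subset of $\mathbb{A}^{2r}(F)$ need not meet a given ball in the local topology. So the step "deforming the $F$-point $(\chi(t_1),\chi(t_2))$ through smooth values" has no justification, and the final "limiting argument" for non-regular targets inherits the same problem. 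To make your strategy work you would have to prove a rational surjectivity statement for products of split regular semisimple classes --- which is precisely the content of Ellers--Gordeev. You should also note that splitness of the semisimple elements (eigenvalues in $F$) is essential and is where the hypothesis $F\supseteq\R$ or $F\supseteq\Q_p$ actually enters; your proposal never addresses it.
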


Without assumptions on the center of $G$ this result easily
implies $w_1(G)w_2(G)w_3(G) = G$ for any non-trivial words 
$w_1, w_2, w_3$.

Our last results deal with Chevalley groups over an arbitrary
infinite field $F$. Here we have a general width $4$ result, and width 3
and 2 in special cases.

\begin{thm} Let $w_1$,$w_2$,$w_3$,$w_4$ be non-trivial words
and let $F$ be an infinite field.
\begin{enumerate}
\item[(i)] If $G$ is
a Chevalley  group over $F$ associated to a complex simple Lie algebra, then \[
G\setminus Z(G)\subseteq w_1(G)w_2(G)w_3(G)w_4(G).
\]
In particular, if $Z(G) = \{ 1 \}$, then $w_1(G)w_2(G)w_3(G)w_4(G) = G$. 
\item[(ii)] If $G=\SL_n(F)$ and $n>2$, then
\[
G\setminus Z(G)\subseteq w_1(G)w_2(G)w_3(G).
\]
Hence,  $w_1(\PSL(n,F))w_2(\PSL(n,F))w_3(\PSL(n,F)) = \PSL(n,F)$. 
\end{enumerate}
\end{thm}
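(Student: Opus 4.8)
The plan is to decouple the word-dependence from the field-arithmetic by first replacing each word image with a full rational conjugacy class, and then proving a covering statement for products of such classes. First I would record that each $w_i$ is dominant on $G$ by Borel's theorem, and that a split semisimple group is unirational over $F$, so that $G(F)$ is Zariski dense in $G$. Since the image of a Zariski-dense set under a dominant morphism is Zariski dense, $w_i(G)=w_i(G(F))$ is Zariski dense in $G$. The regular semisimple elements lying in a fixed split maximal torus and having pairwise distinct eigenvalues in $F$ form a nonempty Zariski-open subset, so density lets me pick a split regular semisimple $s_i\in w_i(G)$ with eigenvalues in general position. Because $w_i(G)$ is invariant under $G(F)$-conjugation, it then contains the entire rational conjugacy class $C_i=s_i^{G(F)}\cong G(F)/T_i(F)$, where $T_i$ is a split maximal torus. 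This reduces part (i) to showing that, for suitably generic split regular semisimple classes $C_1,\dots,C_4$, one has $G\setminus Z(G)\subseteq C_1C_2C_3C_4$.

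For the covering I would first check that it holds geometrically. Each class has dimension $\dim G-\rk(G)$, so already the multiplication map $C_1\times C_2\to G$ has source dimension $2(\dim G-\rk(G))\ge \dim G$ and is dominant; hence over $\bar F$ a product of two generic classes contains the regular semisimple locus, and four classes certainly surject onto $G\setminus Z(G)$. The real content is therefore not dominance but the production of $F$-rational points: for a fixed target $g\in G(F)\setminus Z(G)$ I must exhibit an $F$-point in the fiber of $C_1\times\cdots\times C_4\to G$ over $g$. Since each $C_i$ is an $F$-rational homogeneous space $G/T_i$ with $T_i$ split, I would try to show this fiber is unirational over $F$; as $F$ is infinite, a dominant rational parametrization defined over $F$ then forces an $F$-point. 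Concretely I would split the problem as $g=(c_1c_2)(c_3c_4)$ and reduce the solvability of each factor to a trace-type equation inside rank-one subgroups, where it is solved by explicit rational formulas; alternatively one may invoke the known results on products of regular semisimple conjugacy classes (the multiplicative Deligne--Simpson problem) over a general field.

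For part (ii) the same scheme applies, but the special geometry of $\SL_n$ saves one class. Writing the target as $c_1c_2c_3=g$, that is, a product of three elements with prescribed eigenvalue data equal to a fixed matrix, one is solving the multiplicative Deligne--Simpson problem for three factors in $\SL_n$, which for generic split eigenvalue data admits solutions defined over $F$ because the relevant moduli are $F$-rational and $\SL_n$ has well-behaved rational conjugacy classes. This yields $\SL_n(F)\setminus Z\subseteq w_1(G)w_2(G)w_3(G)$ for $n>2$. The $\PSL$ statement follows by projecting along $\SL_n(F)\to\PSL_n(F)$: the image of $\SL_n(F)\setminus Z$ is $\PSL_n(F)\setminus\{1\}$, while the identity is covered because $1=w_i(1,\dots,1)\in w_i(G)$ for every $i$, so $1\cdot 1\cdot 1$ realizes it.

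The main obstacle is exactly this descent to $F$-rational points in the covering step. Over $\R$ or $\Q_p$ one has an analytic topology and the images $w_i(G)$ have nonempty interior, which is why Theorem \ref{T12} achieves width $2$; over an arbitrary infinite field there is no topology and no point-counting, so the covering must be forced by genuine rationality of the fibers of the product-of-classes map. It is this rationality, together with the precise bookkeeping of how many classes are needed to absorb the $H^1$-type obstructions and to hit every non-central $g$, that drives the jump to width $4$ in general (and width $3$ for $\SL_n$). The only remaining points to address are $Z(G)$ itself, which is deliberately excluded, and the regime of very small $F$, which does not arise under the hypothesis that $F$ is infinite.
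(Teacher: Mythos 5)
Your argument has a genuine gap at its very first step: you claim that the split regular semisimple elements (those lying in a split maximal torus, with eigenvalues in $F$) form a nonempty Zariski-open subset, so that Zariski density of $w_i(G)$ forces $w_i(G)$ to contain such an element. The regular semisimple locus is indeed open, but the \emph{split} condition is not: over a general infinite field it is an arithmetic condition on $F$-points (for $\SL_2$ it asks that $\mathrm{tr}(g)^2-4$ be a square in $F$), and a Zariski-dense subset of $G(F)$ can perfectly well avoid it entirely --- e.g.\ the set of elements of $\SL_2(\Q)$ whose eigenvalues are irrational is Zariski dense. This is precisely the difficulty the theorem is designed around: if each $w_i(G)$ were guaranteed to contain a split regular semisimple class, then Ellers--Gordeev (Theorem 3.1 of the paper) would immediately give width $2$, and there would be nothing left to prove. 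Since your reduction to ``products of four generic split classes'' rests on this false openness claim, the rest of the argument does not get off the ground; moreover, the covering step you then sketch (unirationality of fibers of the product map, the multiplicative Deligne--Simpson problem over $F$) is asserted rather than proved, whereas only the two-class statement of Ellers--Gordeev is actually available.

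For comparison, the paper's route is: reduce via the principal $\SL_2$ to showing that the \emph{product} $w_1(\SL_2(F))w_2(\SL_2(F))$ contains a split regular semisimple element. If neither factor already contains infinitely many split semisimple traces, one takes non-split semisimple classes $C_1,C_2$ in the two images and uses the Vaserstein--Wheland corner-invariant criterion, which shows $C_1C_2$ contains $\mathrm{diag}(\lambda,\lambda^{-1})$ for infinitely many $\lambda\in F^\times$. That is why two words are spent producing each split class and the width is $4=2\times 2$. For part (ii) the paper avoids splitness altogether: it uses Lev's theorem that any non-scalar element of $\SL_n(F)$, $n>2$, is a product of three \emph{cyclic} conjugacy classes, and a regular semisimple element (split or not) is cyclic, so one regular semisimple element per word image suffices --- and that much really does follow from Borel dominance. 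Your part (ii) inherits the same unjustified splitness assumption. To repair your write-up you would need either the corner-invariant mechanism (or some substitute producing split elements in a product of two word images), or a covering theorem for products of \emph{non-split} regular semisimple classes.
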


For some specific words we obtain stronger results.

\begin{thm} Let $w_1 = x^m$ and $w_2 = y^n$ where $m, n$ are positive 
integers. Let $G$ be a Chevalley group over an infinite field $F$ associated to a complex semisimple Lie algebra $\mathfrak{g}$. 
\begin{enumerate}
\item[(i)] If $\mathfrak{g}$ is simple, then
\[
G \setminus Z(G)\subseteq w_1(G)w_2(G).
\]
In particular, if $Z(G) = 1$ then $w_1(G)w_2(G) = G$.
\item[(ii)] If $m=n=2$, then
\[
G = w_1(G)w_2(G).
\]
\end{enumerate}
\end{thm}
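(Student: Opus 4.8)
The plan is to exploit the special structure of a power word: the images $w_1(G)=\{g^{m}:g\in G\}$ and $w_2(G)=\{g^{n}:g\in G\}$ are conjugation-invariant, and for a power word they contain an explicit Zariski-dense supply of regular semisimple conjugacy classes. The theorem would then follow from a factorization principle asserting that every non-central element is a product of two regular semisimple elements whose classes can be prescribed within such a dense family. The hypothesis that $F$ is infinite enters only through the fact that $F^{*}$, and hence its set of $m$-th powers $\{\lambda^{m}:\lambda\in F^{*}\}$, is Zariski-dense in the multiplicative group, which is what lets us move the spectra of the factors freely.

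First I would analyse the $m$-th powers. Fix a maximal torus $T$ of $G$ defined over $F$. For $a\in T(F)$ the element $a^{m}$ again lies in $T(F)$, and it is regular semisimple for all $a$ outside the proper Zariski-closed locus where some root degenerates on $a^{m}$; since $T(F)$ is Zariski-dense in $T$, generic $a$ produce regular semisimple $m$-th powers $a^{m}\in w_1(G)$. As $a$ ranges over $T(F)$ the spectral data of $a^{m}$ (for $\SL_n$, the characteristic polynomial $\prod_i(x-\lambda_i^{m})$, and in general the image of $a$ under the $m$-th power map of $T$ modulo the Weyl group) sweep out a Zariski-dense set of regular classes, because the $m$-th power map on $T$ is dominant. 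The same analysis with $n$ shows that $w_2(G)$ contains a Zariski-dense family of regular semisimple classes. Since $w_1(G)$ and $w_2(G)$ are conjugation-invariant, to place a factor in $w_1(G)$ it suffices that its \emph{class} be the class of some $a^{m}$; no descent beyond this is needed.

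Next I would reduce part (i) to a product-of-two-classes statement: for simple $G$ over an infinite field and any non-central $g$, there exist regular semisimple classes $C_1,C_2$ lying in the dense families above with $g\in C_1C_2$ and both factors $F$-rational. In the model case $G=\SL_n$ this is a factorization theorem of Sourour type, namely that a non-scalar matrix is a product of two regular semisimple matrices with prescribed separable characteristic polynomials over $F$, subject only to the determinant constraint $\det A_1\cdot\det A_2=\det g$. Here both factors automatically lie in $\SL_n(F)$ because $\det(a^{m})=(\det a)^{m}=1$, so the constraint reads $1=\det g$; density of the available spectra then guarantees that admissible $C_1,C_2$ can be found within the $m$-th- and $n$-th-power families. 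Granting the analogue for an arbitrary simple type, one obtains $g=A_1A_2\in w_1(G)w_2(G)$, which is precisely $G\setminus Z(G)\subseteq w_1(G)w_2(G)$.

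Finally, for part (ii) I would handle together the two features special to the squares case, the passage from simple to semisimple $\mathfrak{g}$ and the inclusion of the center. Writing $G=G_1\cdots G_k$ as a product of commuting simple factors, an element $g=g_1\cdots g_k$ is a product of two squares as soon as each component is, so it remains to treat each simple factor: non-central components are products of two squares by part (i) with $m=n=2$, and it suffices to show that every central element of a simple Chevalley group is a product of two squares. This I would verify by an explicit Chevalley-relation computation, using that the Weyl representatives satisfy $w_\alpha(1)^{2}=h_\alpha(-1)$ (for $\SL_2$ this says $-I=\begin{pmatrix}0&1\\-1&0\end{pmatrix}^{2}$) and that the center is generated by such elements $h_\alpha(-1)$; a short root-system computation then shows every central element is a product of at most two squares. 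I expect the real obstacle to be the factorization principle of the third paragraph over a non-closed, possibly positive-characteristic field and for all simple types: over $\bar F$ it follows from a dimension count with Borel's theorem, but producing $F$-rational factors, ensuring separability of the $m$-th-power spectra in characteristic $p$, and covering non-semisimple targets $g$ are the delicate points.
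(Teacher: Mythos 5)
Your part (i) follows the paper's skeleton---produce regular semisimple split elements in $w_1(T)$ and $w_2(T)$ using dominance of the power map on a split maximal torus, then invoke a two-class covering theorem---but the covering theorem is exactly the point you leave open for general simple types, and it is not something one can get by a Sourour-style argument with prescribed spectra. The paper fills this with the Ellers--Gordeev theorem (Theorem 3.1): for a Chevalley group of simple type over \emph{any} field, if $C_1,C_2$ are the conjugacy classes of \emph{any} two regular semisimple elements of a maximal split torus, then $G\setminus Z(G)\subseteq C_1C_2$. This is strictly stronger than what you ask for: no density of available spectra, no prescription of characteristic polynomials, and no determinant bookkeeping is needed---one regular split semisimple class in each of $w_1(G)$ and $w_2(G)$ suffices. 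Your acknowledged ``real obstacle'' is thus a known theorem, but as written your argument for (i) is incomplete outside type $A$.

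The more serious problem is your treatment of central elements in part (ii). You claim the center is generated by the elements $h_\alpha(-1)=w_\alpha(1)^2$ and that a root-system computation finishes the job. This is false whenever $Z(G)$ contains an element of order greater than $2$: each $h_\alpha(-1)$ satisfies $h_\alpha(-1)^2=h_\alpha(1)=1$ and these elements commute, so they generate an elementary abelian $2$-group, whereas $Z(\SL_{2n}(F))=\{rI_{2n}:r^{2n}=1\}$ is cyclic of order $2n$ when $F$ contains a primitive $2n$-th root of unity, and $Z(\Spin_{4n+2})$ contains an element of order $4$ when $i\in F$. These are precisely the hard cases, and they occupy most of the paper's proof: for $\mathfrak{sl}(2n)$ one writes $rI_{2n}$ explicitly as a product of two squares of block-diagonal matrices built from $J_r=\left(\begin{smallmatrix}0&1\\ r&0\end{smallmatrix}\right)$ (with $J_r^2=rI_2$, and a correction block when $(-r)^n=-1$ to fix the determinant); for $\mathfrak{so}(2n+1)$, $\mathfrak{sp}(2n)$, $\mathfrak{so}(4n)$ and $E_7$ one embeds a maximal-rank product of $\mathfrak{sl}(2)$'s whose center surjects onto $Z(G)$; and for $\mathfrak{so}(4n+2)$ one embeds $\mathfrak{so}(6)\times\prod\mathfrak{so}(4)$ and reduces the order-$4$ part of the center to the $\SL_4$ computation. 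Your sketch would only handle the $2$-torsion part of the center, so part (ii) as proposed does not go through.
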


We also give an example showing that a non-trivial central element 
is not in the image of the word map $x^4y^4$ (Proposition 4.1). 
See also \cite{LaSh3} for the probabilistic behavior of word maps 
induced by $x^my^n$ on finite simple groups.

The fact that every element of $G$ above is a product of two squares 
can be regarded as a non-commutative analogue of Lagrange's four squares 
theorem. A similar result for finite quasisimple groups can be found
in \cite{LST2}.

This paper is organized as follows. In Section 2 we deal with
compact Lie groups and prove Theorem 1.1. Section 3 is devoted
to the proof of Theorem 1.2, and Theorems 1.3 and 1.4 are
proved in Section 4.

\section{Compact Lie groups}

In this section we provide solutions for Waring type problems with length 
two for classical connected real compact Lie groups $G$ with large rank,
thus proving Theorem~\ref{T11}. It suffices to work with simply connected groups $G$, i.e.
with $\SU(n)$, $\Sp(n)$, and $\Spin(n)$. Let us start with Got\^o's theorem.

\begin{thm}\cite{Go} 
Let $G$ be a connected compact semisimple Lie group and $T$ a maximal torus of $G$. 
Then there exists $x\in N_G(T)$ such that $\Ad(x)-1$ is non-singular on $\mathrm{Lie}(T)$. 
Hence, every element $g$ of $G$ is conjugate to $[x,t]:=xtx^{-1}t^{-1}$ for some $t\in T$.  
\end{thm}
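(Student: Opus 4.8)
The plan is to reduce both assertions to a statement about the Weyl group $W = N_G(T)/T$ acting on $\mathfrak{t} = \mathrm{Lie}(T)$. Since $T$ is abelian, $\Ad(t)$ fixes $\mathfrak{t}$ pointwise for every $t \in T$, so for $x \in N_G(T)$ the restriction $\Ad(x)|_{\mathfrak{t}}$ depends only on the image $w$ of $x$ in $W$ and agrees with the natural reflection action of $w$ on $\mathfrak{t}$. Thus the first assertion amounts to finding $w \in W$ with $\det(w - 1) \neq 0$, equivalently with $1$ not an eigenvalue of $w$ on $\mathfrak{t}$.

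I would produce such a $w$ by taking a Coxeter element $c$, the product of the simple reflections. The eigenvalues of $c$ on $\mathfrak{t}$ are $e^{2\pi i m_j / h}$, where $h$ is the Coxeter number and the $m_j$ are the exponents, which satisfy $0 < m_j < h$; hence none of these eigenvalues equals $1$ and $c - 1$ is invertible. (When $G$ is not simple one applies this factor by factor.) Lifting $c$ to any $x \in N_G(T)$ then gives $\Ad(x) - 1$ non-singular on $\mathfrak{t}$, proving the first claim.

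For the second assertion, fix this $x$ and let $\sigma = \Ad(x)|_T$ be the automorphism of $T$ given by conjugation. Because $x$ normalizes $T$, the commutator $[x,t] = \sigma(t) t^{-1}$ lies in $T$ for every $t \in T$; and because $T$ is abelian the map $\phi \colon T \to T$, $\phi(t) = \sigma(t) t^{-1}$, is a group homomorphism. Its differential at the identity is $\Ad(x)|_{\mathfrak{t}} - 1$, which is invertible by the first part, so $\phi$ is a local diffeomorphism near $1$ and its image is an open subgroup of $T$; since $T$ is connected, $\phi$ is surjective. Therefore every element of $T$ is of the form $[x,t]$.

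Finally, every element $g$ of the compact connected group $G$ is conjugate to an element of $T$ by the maximal torus theorem, and that element equals $[x,t]$ for a suitable $t$; hence $g$ is conjugate to $[x,t]$. I expect the only real obstacle to be the first step, namely exhibiting a Weyl element with no eigenvalue $1$: the Coxeter element settles it uniformly, but one must invoke the description of its eigenvalues via the exponents (or argue by a type-by-type check that such an element exists). Everything after that is formal, using only that $T$ is abelian and connected.
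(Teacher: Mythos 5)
Your proof is correct. Note that the paper does not prove this statement at all --- it is quoted verbatim from Got\^o's 1949 paper \cite{Go} --- so there is no in-paper argument to compare against; your reduction to the Weyl group, the use of a Coxeter element (whose eigenvalues $e^{2\pi i m_j/h}$ with $0<m_j<h$ exclude $1$), and the observation that $t\mapsto xtx^{-1}t^{-1}$ is a homomorphism of the abelian connected group $T$ with invertible differential, hence surjective, together with the conjugacy of every element into $T$, constitute a complete and standard proof of Got\^o's theorem.
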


Let $w_1$ and $w_2$ be non-trivial words. Every element of $G$ can be conjugated into $T$ so the width two result for $G$
follows if we can prove $T\subset w_1(G) w_2(G)$.
By Got\^o's theorem, it suffices to show
 that $x\in w_1(G)$ and $x^{-1}\in w_2(G)$. This will be achieved by using the principal 
homomorphism \cite{Se}. Identify $S^1$, the subgroup of unit circle of $\C^*$ as a maximal torus of $\SU(2)$.

\begin{lem}
The primitive $n$th roots of unity 
$\zeta_n^{\pm 1}:=e^{\pm \frac{2\pi i}{n}}$ both belong to 
$w_i(\SU(2))\cap S^1$ for $i=1,2$ if $n$ is sufficiently large.
\end{lem}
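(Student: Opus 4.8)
The plan is to exploit the fact that $w_i(\SU(2))$ is a nontrivial conjugation-invariant subset of $\SU(2)$, together with the observation that conjugacy classes in $\SU(2)$ are exactly the level sets of the trace, so that $w_i(\SU(2)) \cap S^1$ is determined by a set of possible trace values. Concretely, the trace of $\zeta_n^{\pm 1} = e^{\pm 2\pi i/n} \in S^1$ is $2\cos(2\pi/n)$, which tends to $2$ as $n \to \infty$. So the statement amounts to showing that the set of trace values achieved by $w_i$ on $\SU(2)$ contains values arbitrarily close to (but not equal to) $2$, i.e. that $w_i(\SU(2))$ contains elements arbitrarily close to the identity but distinct from it.

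First I would recall the structure of $w_i(\SU(2))$. Since $\SU(2)$ is a compact connected Lie group and $w_i$ is a nontrivial word, the word map is real-analytic and its image is a conjugation-invariant subset containing the identity. The key point is that $w_i(\SU(2))$ accumulates at the identity: because $\SU(2)$ is a connected compact group and the word map is a nonconstant analytic map from a connected manifold, its image is not a single point, yet it contains $1$ and is closed under the conjugation action. I would argue that $w_i(\SU(2))$, being a union of conjugacy classes, corresponds under the trace map to a subset of $[-2,2]$ that is an interval (or at least a connected set) with $2$ as an endpoint or accumulation point — this uses connectedness of $\SU(2)^d$ and continuity of both the word map and the trace.

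The precise mechanism I would use: evaluate $w_i$ near the identity. Fix the substitution sending each generator to a one-parameter subgroup, say $x_j \mapsto \exp(t A_j)$ for suitable Lie algebra elements $A_j \in \su(2)$. Then $w_i(\exp(tA_1),\ldots,\exp(tA_d))$ is an analytic path through the identity in $\SU(2)$, and its trace $f(t) := \operatorname{tr} w_i(\exp(tA_1),\ldots)$ is an analytic function of $t$ with $f(0) = 2$. Provided this path is nonconstant — which holds for a generic choice of the $A_j$ since $w_i$ is nontrivial and the word map is dominant by Borel's theorem — the trace takes a whole interval of values $(2-\delta, 2]$ for small $t$. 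Every value $2\cos\theta$ with $\theta$ small and positive arises, and in particular $2\cos(2\pi/n) = \operatorname{tr}(\zeta_n^{\pm 1})$ arises for all sufficiently large $n$. Since an element of $\SU(2)$ with trace $2\cos(2\pi/n)$ is conjugate to $\zeta_n^{\pm 1} \in S^1$, and $w_i(\SU(2))$ is conjugation-invariant, we conclude $\zeta_n^{\pm 1} \in w_i(\SU(2)) \cap S^1$.

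The main obstacle will be establishing that the trace path $f(t)$ is genuinely nonconstant near $t=0$, i.e. that we can choose the one-parameter subgroups so that $w_i$ does not map the whole path to $\{1\}$ or to a trace-constant family. Here I would invoke either Borel's dominance theorem (the word map on $\SL_2(\C)$ is dominant, hence nonconstant, so its restriction to a suitable real form and one-parameter family is nonconstant after complexifying and using the identity theorem for analytic functions) or a direct computation showing the first nonvanishing Taylor coefficient of $f(t)-2$ is negative. Once nonconstancy is secured, the intermediate value theorem handles the rest, and the requirement that $n$ be "sufficiently large" is precisely what guarantees $2\cos(2\pi/n)$ lands in the interval $(2-\delta,2)$ of attained trace values. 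A subtle point to verify is that both signs $\zeta_n$ and $\zeta_n^{-1}$ are obtained; but these are complex conjugates with the same trace, hence conjugate in $\SU(2)$, so invariance under conjugation yields both simultaneously.
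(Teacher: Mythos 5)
Your argument is correct, and it reaches the conclusion by a somewhat different route than the paper. The paper's proof is a two-line appeal to \cite[Cor.~5]{La}: since $w_i$ is non-trivial, $w_i(\SU(2))$ contains a non-empty open subset of $\SU(2)$; being also closed, conjugation-invariant, and stable under $x\mapsto x^{-1}$ (as $x$ and $x^{-1}$ are conjugate in $\SU(2)$), its intersection with $S^1$ is a closed arc that is a symmetric neighborhood of $1$, and $\zeta_n^{\pm 1}$ lies in that arc for $n$ large. You instead avoid the open-image theorem entirely and use only the weaker input that the word map is nonconstant on $\SU(2)^d$: a continuous path $t\mapsto w_i(\exp(tA_1),\dots,\exp(tA_d))$ through the identity whose trace is not identically $2$, combined with the intermediate value theorem, already shows that the set of attained traces contains an interval $[2-\delta,2]$, and since conjugacy classes in $\SU(2)$ are exactly the trace fibers (and $\zeta_n$, $\zeta_n^{-1}$ have equal trace, hence are conjugate), both roots land in $w_i(\SU(2))\cap S^1$ once $2\cos(2\pi/n)>2-\delta$. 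For the nonconstancy step you do not even need Borel's dominance theorem: $\SU(2)$ contains a nonabelian free group, so some substitution gives $w_i\neq 1$, and surjectivity of $\exp$ on $\SU(2)$ lets you realize that substitution at $t=1$ on your path. Both proofs ultimately rest on the same two facts --- conjugacy classes of $\SU(2)$ are determined by trace, and the trace image is a connected set with right endpoint $2$ and positive length --- but yours is more self-contained, while the paper's citation buys the stronger ``closed symmetric arc'' description of $w_i(\SU(2))\cap S^1$, which is reused verbatim in the proof of the final theorem of Section~2.
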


\begin{proof}
Since $w_1$ and $w_2$ are non-trivial, $w_i(\SU(2))$ contains a non-empty open subset of $\SU(2)$ for $i=1,2$ \cite[Cor. 5]{La}. As 
$\SU(2)$ is compact and connected and $x$ and $x^{-1}$ are conjugate for any $x\in\SU(2)$, it follows that $w_i(\SU(2))\cap S^1$ is
a closed arc and also a symmetric neighborhood of $1$ in $S^1$ for $i=1,2$. Hence, the primitive $n$-th roots of unity $\zeta_n,\zeta_n^{-1}\in S^1$ belong to $w_i(\SU(2))$ for $i=1,2$ if $n$ is sufficiently large.
\end{proof}

\begin{definition}
We make the following definitions.
\begin{enumerate}
\item Let $I_n$ be the identity complex $n\times n$ matrix.
\item Let $0_n$ be the zero complex $n\times n$ matrix.
\item Let $E_n^i$ be the diagonal complex $n\times n$ matrix whose $(i,i)$-entry is $1$ and all other entries $0$.
\item Let $L_n^i$ be the linear functional on diagonal complex $n\times n$ matrices  such that $L_n^i(E_n^j)=\delta_{ij}$ (Kronecker delta) for all $1\leq j\leq n$.
\item Let $s_n\in\U(n)$ be the $n$-cycle
$$\begin{pmatrix}
      0 &0 &...&0 & 1\\
      1 &0&... &0  &0\\
      0 &1&...&0& 0\\
      \vdots&\vdots &\ddots &\vdots &\vdots\\
      0 &0 &... &1 &0
\end{pmatrix}.$$
\end{enumerate}
\end{definition}

\begin{thm}
\label{T25} For any non-trivial words $w_1,w_2$ and a sufficiently large $n$
we have
\begin{equation*}
\SU(n)=w_1(\SU(n))w_2(\SU(n)).
\end{equation*}\end{thm}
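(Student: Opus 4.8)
The plan is to reduce the Waring problem for $\SU(n)$ to the one-parameter torus statement established in the preceding lemma, using Got\^o's theorem together with the principal homomorphism. By Got\^o's theorem every element of $\SU(n)$ is conjugate to a commutator $[x,t]$ with $x\in N_G(T)$ fixed (chosen so that $\Ad(x)-1$ is nonsingular on $\mathrm{Lie}(T)$) and $t$ ranging over the maximal torus $T$. Since word values are invariant under conjugation, it suffices to show $x\in w_1(\SU(n))$ and $x^{-1}\in w_2(\SU(n))$, for then every conjugacy class meeting $T$ — hence all of $\SU(n)$ — lies in $w_1(\SU(n))\,w_2(\SU(n))$. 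So the real content is to exhibit a suitable element $x\in N_G(T)$ that lies in $w_1(\SU(n))$ and whose inverse lies in $w_2(\SU(n))$.

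The key idea is to build $x$ out of copies of the $\SU(2)$ elements supplied by the lemma. First I would fix a large $N$ so that the primitive $N$th roots of unity $\zeta_N^{\pm1}=e^{\pm 2\pi i/N}$ both lie in $w_i(\SU(2))\cap S^1$ for $i=1,2$, as guaranteed by the lemma. The plan is to take $x$ to be a block-type element modeled on the cyclic permutation matrix $s_n$ from the Definition, so that $x$ normalizes $T$ and acts on $\mathrm{Lie}(T)$ with no nonzero fixed vectors. The natural construction is to use the principal $\SL_2\hookrightarrow \SU(n)$ (the principal homomorphism of Serre), or more simply to decompose the standard representation of $\SU(n)$ into $\SU(2)$-blocks and place the roots $\zeta_N^{\pm1}$ diagonally so as to realize a long cycle. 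Concretely, one wants $x$ to be conjugate in $\U(n)$ to a diagonal matrix $\mathrm{diag}(\zeta_N^{a_1},\dots,\zeta_N^{a_n})$ whose eigenvalue pattern forces $\Ad(x)-1$ to be invertible on $\mathrm{Lie}(T)$, i.e.\ no ratio $\zeta_N^{a_i-a_j}$ equals $1$ among the relevant root-space directions.

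The technical work is to arrange all three requirements simultaneously: (a) $x\in N_G(T)$ with $\Ad(x)-1$ nonsingular on $\mathrm{Lie}(T)$, so Got\^o applies with this particular $x$; (b) $x\in w_1(\SU(n))$; and (c) $x^{-1}\in w_2(\SU(n))$. For (b) and (c) the mechanism is that $w_i(\SU(2))\cap S^1$ contains $\zeta_N^{\pm1}$, so any element that decomposes as a product or block-diagonal of factors each conjugate into $w_i(\SU(2))$ will lie in $w_i(\SU(n))$; here I would use that word images are closed under the diagonal embedding $\SU(2)^k\hookrightarrow\SU(n)$ combined with conjugation, since $w_i(\SU(2))^k\subseteq w_i(\SU(2)^k)\subseteq w_i(\SU(n))$. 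The main obstacle is the compatibility between the permutation/cyclic structure needed for nonsingularity of $\Ad(x)-1$ on the torus (condition (a)) and the block decomposition into $\SU(2)$'s that delivers membership in the word images (conditions (b), (c)). One must choose the exponents $a_i$ and the cycle length (which is where ``sufficiently large $n$'' enters) so that the eigenvalues are $N$th roots of unity realizing the required $\SU(2)$-blocks, while the induced action on root spaces avoids eigenvalue $1$.

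Finally, I would verify that $x^{-1}$ admits the same kind of decomposition with the roots $\zeta_N^{\mp1}$, which is symmetric since $w_2(\SU(2))\cap S^1$ is itself a symmetric neighborhood of $1$ containing both $\zeta_N$ and $\zeta_N^{-1}$ by the lemma; this makes (c) follow from the same construction applied to $w_2$ in place of $w_1$. Once $x\in w_1(\SU(n))$, $x^{-1}\in w_2(\SU(n))$, and $\Ad(x)-1$ is nonsingular on $\mathrm{Lie}(T)$ are all in hand, Got\^o's theorem yields that every $g\in\SU(n)$ is conjugate to $[x,t]=(x)(tx^{-1}t^{-1})$ with the first factor in $w_1(\SU(n))$ and the second, being conjugate to $x^{-1}$, in $w_2(\SU(n))$, giving $g\in w_1(\SU(n))\,w_2(\SU(n))$ and hence the desired identity $\SU(n)=w_1(\SU(n))w_2(\SU(n))$ for all sufficiently large $n$.
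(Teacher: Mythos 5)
Your reduction via Got\^o's theorem is exactly right: it suffices to produce $x\in N_{\SU(n)}(T)$ with $\Ad(x)-1$ nonsingular on $\mathrm{Lie}(T)$, $x\in w_1(\SU(n))$, and $x^{-1}\in w_2(\SU(n))$. But the construction you propose for $x$ cannot be completed, and the obstacle you flag as ``the main obstacle'' is in fact fatal to your version of the plan. You fix $N$ once and for all (depending only on $w_1,w_2$) and try to build $x$ with all eigenvalues among the $N$-th roots of unity, either by block-diagonal copies of $\SU(2)$ or by prescribing a diagonal form $\mathrm{diag}(\zeta_N^{a_1},\dots,\zeta_N^{a_n})$. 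For the diagonal maximal torus $T$ of $\SU(n)$, the condition that $\Ad(x)-1$ be nonsingular on $\mathrm{Lie}(T)$ forces the image of $x$ in the Weyl group $S_n$ to be an $n$-cycle (a permutation with $k$ cycles fixes a $(k-1)$-dimensional subspace of the trace-zero diagonal algebra). An element of $N_{\SU(n)}(T)$ inducing an $n$-cycle has order divisible by $n$, whereas an element all of whose eigenvalues are $N$-th roots of unity satisfies $x^N=I$; for $n>N$ these are incompatible. Block-diagonal $2\times 2$ constructions fail for the same reason: their Weyl images are products of disjoint transpositions, never an $n$-cycle. Separately, your stated criterion for nonsingularity of $\Ad(x)-1$ (``no ratio $\zeta_N^{a_i-a_j}$ equals $1$'') is the condition for $x$ to be regular semisimple, which is not the relevant condition and indeed a regular element lying in a torus acts trivially on that torus's Lie algebra.

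The paper's resolution is to let the root of unity depend on $n$: it uses the principal homomorphism $\widetilde{p}\colon\SU(2)\to\SU(n)$ (the $(n-1)$-st symmetric power) and the single element $\zeta_{2n}\in S^1\subset\SU(2)$. Its image has eigenvalues $\zeta_{2n}^{n-1},\zeta_{2n}^{n-3},\dots,\zeta_{2n}^{1-n}$, which match those of $s_n$ (for $n$ odd) or $\zeta_{2n}s_n$ (for $n$ even), so $\widetilde{p}(\zeta_{2n})$ is conjugate to a monomial matrix inducing an $n$-cycle, and $\Ad(x_n)-1$ is nonsingular. Membership in the word images is then immediate because $\zeta_{2n}\to 1$ as $n\to\infty$ while $w_i(\SU(2))\cap S^1$ is a \emph{fixed} symmetric closed arc around $1$ (your Lemma); so $\zeta_{2n}^{\pm1}\in w_i(\SU(2))$ for $n$ large, and $\widetilde{p}(w_i(\SU(2)))\subseteq w_i(\SU(n))$ since $\widetilde{p}$ is a homomorphism. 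The key idea missing from your argument is precisely this: do not fix the order of the root of unity in advance, but take it to grow with $n$ and exploit that it converges to the identity.
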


\begin{proof} 
Consider the commutative diagram
\begin{equation*}
\xymatrix{
\SL(2,\C) \ar[rd]^{p} \ar[r]^{\widetilde{p}}
&\SL(n,\C) \ar@{->>}[d]^{\pi}\\
&\PSL(n,\C) }
\end{equation*}
where $p$ is the principal homomorphism associated to simple roots \cite{FH}
$$\Delta:=\{L^{1}_n-L_n^{2},L_n^2-L_n^3,...,L_n^{n-1}-L_n^{n}\},$$
 $\pi$ the adjoint quotient, and $\widetilde{p}$ a lifting of $p$. Let $H=\begin{pmatrix}
      1  & 0\\
      0  &-1
\end{pmatrix}\in\mathfrak{sl}(2,\C)$. Homomorphism $\widetilde{p}$ is isomorphic to $\mathrm{Sym}^{n-1}$ since $\alpha(d\widetilde{p}(H))=2$ for all $\alpha\in\Delta$ \cite[$\mathsection2.3$]{Se}. By restricting to suitable maximal 
compact subgroups, we obtain $\widetilde{p}:\SU(2)\rightarrow \SU(n)$. Identify $S^1$ as a maximal torus of $\SU(2)$. The set of eigenvalues of  $\widetilde{p}(\zeta_m)$ is 
\begin{equation*}
\{\zeta_m^{n-1},\zeta_m^{n-3},...,\zeta_m^{3-n},\zeta_m^{1-n}\}.
\end{equation*}
Then $\widetilde{p}(\zeta_{2n})$ is conjugate to $x_n:=s_n$ 
if $n$ is odd and $\widetilde{p}(\zeta_{2n})$ is conjugate to 
$x_n:=s_n\cdot\zeta_{2n}I_{n}$
if $n$ is even by comparing eigenvalues. Let $T$ be the diagonal maximal torus of $\SU(n)$. Since $\Ad(x_n)-1$ is non-singular on $\mathrm{Lie}(T)$ for all $n$, $x_n\in w_1(\widetilde{p}(\SU(2)))$ and $x_n^{-1}\in w_2(\widetilde{p}(\SU(2)))$ for $n\gg0$  by Lemma 2.2, we are done by Got\^o's theorem.
\end{proof}

Next, we work with the real compact symplectic group 
$$\Sp(n):=\U(2n)\cap \Sp(2n,\C),$$
where $\Sp(2n,\C)$ is the subgroup of $\GL(2n,\C)$ that preserves the form 
$$\begin{pmatrix}
      0_n  &I_n\\
      -I_n  &0_n
\end{pmatrix}.$$

\begin{thm}
\label{T27}
For any non-trivial words $w_1,w_2$ and a sufficiently large $n$ 
we have
\begin{equation*}
\Sp(n)=w_1(\Sp(n))w_2(\Sp(n)).
\end{equation*}
\end{thm}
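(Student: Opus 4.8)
The plan is to mirror the proof of Theorem~\ref{T25}, replacing $\SU(n)$ by $\Sp(n)$ and the $n$-cycle $s_n$ by a suitable signed $2n$-cycle representing the Coxeter element of the root system $C_n$. Since $\Sp(n)$ is connected, compact and semisimple, Got\^o's theorem applies: fixing a maximal torus $T$, it suffices to produce an element $x_n\in N_{\Sp(n)}(T)$ with $\Ad(x_n)-1$ nonsingular on $\mathrm{Lie}(T)$ and with $x_n\in w_1(\Sp(n))$ and $x_n^{-1}\in w_2(\Sp(n))$; then $T\subset w_1(\Sp(n))w_2(\Sp(n))$, and every element of $\Sp(n)$, being conjugate into $T$, lies in this product. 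Because word images are closed under conjugation and any homomorphism $\widetilde p$ satisfies $w_i(\widetilde p(\SU(2)))=\widetilde p(w_i(\SU(2)))$, it is enough to realize a conjugate of $x_n$ and of $x_n^{-1}$ inside the image of a principal $\SU(2)$.

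First I would set up the principal homomorphism $\widetilde p\colon\SU(2)\to\Sp(n)$ attached to the simple roots of type $C_n$ (those of the form $L^{i}-L^{i+1}$ for $1\le i<n$ together with $2L^{n}$), restricting to maximal compacts exactly as in Theorem~\ref{T25}. Writing $H=\begin{pmatrix}1&0\\0&-1\end{pmatrix}\in\mathfrak{sl}(2,\C)$, the normalization $\alpha(d\widetilde p(H))=2$ for every simple root $\alpha$ forces $d\widetilde p(H)$ to have eigenvalues $\pm1,\pm3,\dots,\pm(2n-1)$ on the standard $2n$-dimensional representation $V=\C^{2n}$. Consequently the eigenvalues of $\widetilde p(\zeta_m)$ on $V$ are $\{\zeta_m^{\pm(2k-1)}:1\le k\le n\}$, and taking $m=4n$ these become the full set of odd powers $\{\zeta_{4n}^{j}:j\text{ odd}\}=\zeta_{4n}\cdot\{\zeta_{2n}^{k}:0\le k<2n\}$, a set of $2n$ distinct values. (Note that $m=2n$ would produce each eigenvalue twice, yielding a non-regular element; the doubling of the Coxeter number $h=2n$ to $4n$ is what is needed here, just as $\SU(n)$ used twice its Coxeter number $n$.)

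The key step is to identify an explicit $x_n\in N_{\Sp(n)}(T)$ with these eigenvalues. I would take $T$ to be the diagonal torus and $x_n$ the monomial symplectic matrix realizing the signed $2n$-cycle $f_1\mapsto f_2\mapsto\cdots\mapsto f_n\mapsto g_1\mapsto\cdots\mapsto g_n\mapsto f_1$ on the weight basis (so $x_n$ is the permutation matrix $s_{2n}$ with one sign adjusted), normalized so that the product of its nonzero entries equals $-1$; this is the $C_n$-analogue of $s_n$ and represents a Coxeter element of $W(C_n)$. Its eigenvalues on $V$ are then $\zeta_{4n}\cdot\zeta_{2n}^{k}$, matching those of $\widetilde p(\zeta_{4n})$, so these two regular semisimple elements of $\Sp(n)$ are conjugate. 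Since the Coxeter element acts on $\mathrm{Lie}(T)$ (the reflection representation) with eigenvalues $\zeta_{2n}^{1},\zeta_{2n}^{3},\dots,\zeta_{2n}^{2n-1}$, none equal to $1$, the operator $\Ad(x_n)-1$ is nonsingular there. Finally, applying Lemma 2.2 with $4n$ in place of $n$, for $n\gg0$ both $\zeta_{4n}$ and $\zeta_{4n}^{-1}$ lie in $w_i(\SU(2))$, so $\widetilde p(\zeta_{4n})\in w_1(\Sp(n))$ and $\widetilde p(\zeta_{4n})^{-1}\in w_2(\Sp(n))$; passing to the conjugate $x_n$ and invoking Got\^o's theorem completes the argument.

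The main obstacle I anticipate is the bookkeeping in the third step: checking that the signed $2n$-cycle genuinely lies in the compact symplectic group, normalizes the chosen torus, and has exactly the claimed entry-product $-1$, and then confirming that its eigenvalues coincide with those of $\widetilde p(\zeta_{4n})$ on the nose rather than differing by the central element $-I$ (the kind of central twist that forced the even-$n$ correction $s_n\cdot\zeta_{2n}I_n$ in the $\SU(n)$ case). Everything else is a direct transcription of the proof of Theorem~\ref{T25}.
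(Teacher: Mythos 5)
Your proposal is correct and follows essentially the same route as the paper: Got\^o's theorem reduces the problem to producing $x_n\in N_{\Sp(n)}(T)$ with $\Ad(x_n)-1$ nonsingular, the paper's explicit $x_n$ is exactly the signed $2n$-cycle (Coxeter element) with $x_n^{2n}=-I$ that you describe, and it is matched to $\widetilde p(\zeta_{4n})$ by comparing eigenvalues, using that the full signed-permutation Weyl group of type $C_n$ makes the characteristic polynomial determine the conjugacy class. The only cosmetic difference is that you verify nonsingularity of $\Ad(x_n)-1$ via the Coxeter exponents $1,3,\dots,2n-1$ rather than by direct inspection.
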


\begin{proof} Let $T$ be the maximal torus of $\Sp(n)$ consisting of diagonal matrices with complex entries. 
Let $x_n\in N_{\Sp(n)}(T)$ be the element
$$\begin{pmatrix}
      s_n  &0_n\\
      0_n  &s_n
\end{pmatrix}\cdot
\begin{pmatrix}
      I_n-E_n^1  &E_n^1\\
      -E_n^1  & I_n-E_n^1
\end{pmatrix}.$$
Then it is easy to see that $\Ad(x_n)-1$ is non-singular on $\mathrm{Lie}(T)$. By Got\^o's Theorem, it suffices to show
that $x_n\in w_1(\Sp(n))$ and $x_n^{-1}\in w_2(\Sp(n))$ for all sufficiently large $n$.

Consider the commutative diagram
\begin{equation*}
\xymatrix{
\SL(2,\C) \ar[rd]^{p} \ar[r]^{\widetilde{p}}
&\Sp(2n,\C) \ar@{->>}[d]^{\pi}\\
&\PSp(2n,\C) }
\end{equation*}
where $p$ is the principal homomorphism associated to simple roots  \cite{FH} $$\Delta:=\{L^{1}_{2n}-L_{2n}^{2},L_{2n}^2-L_{2n}^3,...,L_{2n}^{n-1}-L_{2n}^{n},2L_{2n}^n\},$$ $\pi$ the adjoint quotient, and $\widetilde{p}$ a lifting of $p$. Let $H=\begin{pmatrix}
      1  & 0\\
      0  &-1
\end{pmatrix}\in\mathfrak{sl}(2,\C)$. Since $\alpha(d\widetilde{p}(H))=2$ for all $\alpha\in\Delta$ \cite[$\mathsection2.3$]{Se}, the 
set of weights of $\widetilde{p}$, viewed as a $2n$-dimensional representation, is
$$\{2n-1,2n-3,...,1,-1,...,3-2n,1-2n\}.$$
By restricting to suitable maximal 
compact subgroups, we obtain $\widetilde{p}:\SU(2)\rightarrow \Sp(n)$. 

Identify $S^1$ as a maximal torus of $\SU(2)$. The set of eigenvalues of  $\widetilde{p}(\zeta_m)$ is 
\begin{equation*}
\{\zeta_m^{2n-1},\zeta_m^{2n-3},...,\zeta_m^{3-2n},\zeta_m^{1-2n}\}.
\end{equation*}
Define $e_1:=(1,0,...,0)\in \C^{2n}$. Since $x_n$ satisfies $x_n^{2n}+1=0$ and the set of vectors
$$\{x_ne_1,x_n^2e_1,...,x_n^{2n}e_1\}$$
is linearly independent, the characteristic polynomial of $x_n$ is $t^{2n}+1$. 
In $\Sp(n)$, the characteristic polynomial determines the conjugacy class.  (Indeed, the diagonal matrices with entries 
$\lambda_1,\ldots,\lambda_n, \lambda_1^{-1},\ldots,\lambda_n^{-1}$ form a single orbit under the action of the Weyl group.)
Since $\widetilde{p}(\zeta_{4n})$ and $x_n$ have the same characteristic polynomial, it follows 
that they are conjugate in $\Sp(n)$. Hence, $x_n$ and $x_n^{-1}$ respectively belong to $w_1(\widetilde{p}(\SU(2)))$ and $w_2(\widetilde{p}(\SU(2)))$ when $n$ is sufficiently large by Lemma 2.2. We are done.
\end{proof}

We then consider compact special orthogonal group $\SO(n)$ and its simply connected cover $\Spin(n)$ for $n\geq3$.

\begin{thm}For any non-trivial words $w_1,w_2$ and a sufficiently large
$n$ we have
\begin{equation*}
\SO(n)=w_1(\SO(n))w_2(\SO(n)).
\end{equation*}
\end{thm}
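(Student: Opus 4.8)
The plan is to mimic the strategy already used for $\SU(n)$ and $\Sp(n)$: produce an element $x_n \in \SO(n)$ (lifting to $\Spin(n)$) such that $\Ad(x_n)-1$ is non-singular on the Lie algebra of a maximal torus $T$, show that $x_n$ and $x_n^{-1}$ lie in the images of the principal $\SU(2)$ under $w_1$ and $w_2$ respectively, and then invoke Got\^o's theorem to conclude that every element of $T$—hence, after conjugation, of $\SO(n)$—is a product of two word values. The principal homomorphism $\widetilde p\colon \SU(2)\to\SO(n)$ is again constructed via $\alpha(d\widetilde p(H))=2$ for all simple roots, so I would first write down the weights of $\widetilde p$ as a $\dim$-dimensional representation and thereby the eigenvalues of $\widetilde p(\zeta_m)$, exactly as in the $\Sp$ case. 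The case split by parity is unavoidable here because the root systems $B_\ell$ (type $\SO(2\ell+1)$) and $D_\ell$ (type $\SO(2\ell)$) differ, so I expect to treat $\SO(2\ell+1)$ and $\SO(2\ell)$ separately, each with its own simple root data.

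For each type I would exhibit an explicit Coxeter-like element $x_n$ in $N_{\SO(n)}(T)$ whose eigenvalues match those of $\widetilde p(\zeta_m)$ for a suitable primitive root of unity $\zeta_m$. In the odd case $\SO(2\ell+1)$, the weights of the principal $\SU(2)$ run through $\{2\ell-2,2\ell-4,\dots,2,0,-2,\dots,-(2\ell-2)\}$ together with the forced zero weight, so $\widetilde p(\zeta_m)$ has eigenvalues $\zeta_m^{\pm 2k}$ and a single eigenvalue $1$; I would choose $x_n$ to be a product of a Coxeter element with the appropriate $2\times2$ rotation blocks so that its eigenvalues coincide with these, picking $m$ (some multiple of $4\ell$ or so) to realize a regular element. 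The essential mechanism, as before, is that the conjugacy class in $\SO(n)$ of a sufficiently generic element is determined by its eigenvalues (equivalently its characteristic polynomial), together with the fact that $\Ad(x_n)-1$ is non-singular on $\mathrm{Lie}(T)$, i.e.\ $x_n$ acts on $T$ without nonzero fixed points.

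\textbf{The main obstacle} is the difference between $\SO(n)$ and $\O(n)$ and, relatedly, the failure of the characteristic polynomial to determine the $\SO$-conjugacy class in the even orthogonal case. For $\SO(2\ell)$ the Weyl group of $D_\ell$ consists of signed permutations with an \emph{even} number of sign changes, so a pair $\{\lambda,\lambda^{-1}\}$ of complex-conjugate eigenvalues on the unit circle does not freely split into two $\SO$-classes the way it does in type $B$ or $C$; two elements with the same characteristic polynomial can fail to be $\SO(2\ell)$-conjugate (the classic phenomenon for the eigenvalue pattern consisting entirely of $\pm1$'s, or more generally when the element lies in the ``extra'' component distinguishing $\SO$ from $\O$). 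I would handle this by choosing the eigenvalues of $x_n$ to be genuinely complex and distinct (no real eigenvalues $\pm1$), so that $x_n$ is regular and its $\O(2\ell)$-class is a single $\SO(2\ell)$-class; one must verify that $\widetilde p(\zeta_m)$ and $x_n$ agree not merely in characteristic polynomial but in the refined $D_\ell$-invariant, which amounts to checking they lie in the same $\SO$ (not just $\O$) orbit. Once the generic regular element is correctly matched in each parity, the rest goes through verbatim: non-singularity of $\Ad(x_n)-1$ gives Got\^o's element, Lemma 2.2 places $x_n,x_n^{-1}$ in the word images of the principal $\SU(2)$ for $n\gg0$, and Got\^o's theorem finishes. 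Finally, since the statement is for $\SO(n)$ while the reductions use simple connectivity, I would either argue directly in $\SO(n)$ or lift through $\Spin(n)\to\SO(n)$, noting that surjectivity of word maps passes to quotients.
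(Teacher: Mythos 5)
Your overall strategy (principal $\SU(2)$, an explicit element of $N_G(T)$ acting fixed-point-freely on $\mathrm{Lie}(T)$, Got\^o's theorem, and a separate handling of the odd and even orthogonal groups) matches the paper's, and you correctly locate the one genuine difficulty: in type $D_\ell$ the characteristic polynomial does not determine the $\SO(2\ell)$-conjugacy class. But your proposed resolution of that difficulty is backwards and would fail. For a semisimple $g\in\SO(2\ell)$ whose eigenvalues come in conjugate pairs none of which is $\pm1$, the centralizer of $g$ in $\O(2\ell)$ is a product of unitary groups (the maximal torus, in the regular case) and hence lies in $\SO(2\ell)$; consequently the $\O(2\ell)$-class of such a $g$ splits into \emph{two} $\SO(2\ell)$-classes. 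It is precisely the \emph{presence} of an eigenvalue $\pm1$ that forces the $\O$-class to be a single $\SO$-class, since then the centralizer contains an orthogonal transformation of determinant $-1$ supported on that eigenspace. So choosing $x_\ell$ with ``genuinely complex and distinct'' eigenvalues makes the matching problem strictly harder: agreement of characteristic polynomials would then leave you unable to decide which of the two $\SO$-classes contains $\widetilde{p}(\zeta_m)$.

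Moreover, that choice is not even available. In type $D_\ell$ the restriction of the standard $2\ell$-dimensional representation to the principal $\mathfrak{sl}_2$ is $\mathrm{Sym}^{2\ell-2}\oplus\mathrm{Sym}^{0}$, so the weight $0$ occurs with multiplicity two and every element $\widetilde{p}(\zeta_m)$ of the image of the principal torus has eigenvalue $1$ with multiplicity at least $2$; any $x_\ell$ conjugate to it must therefore have eigenvalue $1$ and cannot be regular with no real eigenvalues. The paper turns this forced degeneracy into the fix: it takes $x_\ell\in N_{K(2\ell)}(T)$ with characteristic polynomial $(t^2-1)(t^{2\ell-2}-1)$, checks that $\Ad(x_\ell)-1$ is nonetheless nonsingular on $\mathrm{Lie}(T)$ (this concerns only the Weyl-group image of $x_\ell$ and is compatible with $x_\ell$ fixing vectors in $\C^{2\ell}$), and then invokes the observation that for orthogonal matrices having $1$ as an eigenvalue the characteristic polynomial \emph{does} determine the $\SO(2\ell)$-class. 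Your blueprint needs exactly this repair at its central step; as a smaller point, the paper also avoids a separate type $B$ analysis by mapping $\SO(2n)\to\SO(2n+1)$ so that a maximal torus surjects onto a maximal torus.
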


\begin{proof} 
Since we have a morphism $\SO(2n)\rightarrow \SO(2n+1)$ such that the image of a maximal torus of $\SO(2n)$ is a maximal torus of $\SO(2n+1)$, it suffices to deal with $\SO(2n)$. This is a maximal compact subgroup of $\SO(2n,\C)$. Let $K(2n,\C)$ be the subgroup of $\SL(2n,\C)$ preserving
the form
$$\begin{pmatrix}
      0_n  &I_n\\
      I_n  &0_n
\end{pmatrix}.$$ 
Since $K(2n,\C)$ is isomorphic to $\SO(2n,\C)$ \cite{FH} and has a diagonal maximal torus, we use $K(2n,\C)$ and $K(2n):=\U(2n)\cap K(2n,\C)$ (a maximal compact of $K(2n,\C)$) instead of $\SO(2n,\C)$ and $\SO(2n)$. One checks that the diagonal maximal torus $T$ of $K(2n)$ is equal to the diagonal maximal torus of $\Sp(n)$. Let $s_{n-1}'$ be the $n\times n$ matrix 
$$\begin{pmatrix}
      1 &0 &...&0 & 0\\
      0 &0&... &0  &1\\
      0 &1&...&0& 0\\
      \vdots&\vdots &\ddots &\vdots &\vdots\\
      0 &0 &... &1 &0
\end{pmatrix}$$
which fixes $e_1:=(1,0,...,0)\in\C^n$ and is an $(n-1)$-cycle on the natural complement of $e_1$ in $\C^n$. Let $x_n\in N_{K(2n)}(T)$ be the element
$$\begin{pmatrix}
      s_{n-1}'  &0_n\\
      0_n  &s_{n-1}'
\end{pmatrix}\cdot
\begin{pmatrix}
      I_n-E_n^1-E_n^2  & E_n^1+E_n^2\\
      E_n^1+E_n^2  & I_n-E_n^1-E_n^2
\end{pmatrix}.$$
By choosing the basis 
$$\{E_{2n}^1-E_{2n}^{n+1},E_{2n}^2-E_{2n}^{n+2},...,E_{2n}^n-E_{2n}^{2n}\}$$
for $\mathrm{Lie}(T)$, the $\Ad(x_n)$-action on $\mathrm{Lie}(T)$ is given by the $n\times n$ matrix
$$\begin{pmatrix}
      -1 &0 & 0 &...&0 & 0\\
      0 &0& 0&... &0  &1\\
      0 &-1&0&...&0& 0\\
      0 &0&1&...&0& 0\\
      \vdots&\vdots&\vdots &\ddots &\vdots &\vdots\\
      0 &0&0 &... &1 &0
\end{pmatrix}.$$
One sees that $\Ad(x_n)-1$ is non-singular on $\mathrm{Lie}(T)$. By Got\^o's Theorem, it suffices to show
that $x_n\in w_1(K(2n))$ and $x_n^{-1}\in w_2(K(2n))$ for all sufficiently large $n$. Since $x_n$
is conjugate in $\GL(2n,\C)$ to the permutation matrix
$$\begin{pmatrix}
      0 &1 & 0 &...&0 & 0\\
      1 &0& 0&... &0  &0\\
      0 &0&0&...&0& 1\\
      0 &0&1&...&0& 0\\
      \vdots&\vdots&\vdots &\ddots &\vdots &\vdots\\
      0 &0&0 &... &1 &0
\end{pmatrix},$$
the characteristic polynomial of $x_n$ is $(t^2-1)(t^{2n-2}-1)$. 

Consider the commutative diagram
\begin{equation*}
\xymatrix{
\SL(2,\C) \ar[rd]^{p} \ar[r]^{\widetilde{p}}
&K(2n,\C) \ar@{->>}[d]^{\pi}\\
&K(2n,\C)/\{\pm I_{2n}\} }
\end{equation*}
where $p$ is the principal homomorphism associated to simple roots  \cite{FH} $$\Delta:=\{L^{1}_{2n}-L_{2n}^{2},L_{2n}^2-L_{2n}^3,...,L_{2n}^{n-1}-L_{2n}^{n},L_{2n}^{n-1}+L_{2n}^{n}\},$$ $\pi$ the adjoint quotient, and $\widetilde{p}$ a lifting of $p$. Let $H=\begin{pmatrix}
      1  & 0\\
      0  &-1
\end{pmatrix}\in\mathfrak{sl}(2,\C)$. Since $\alpha(d\widetilde{p}(H))=2$ for all $\alpha\in\Delta$ \cite[$\mathsection2.3$]{Se}, the multiset of weights of the $2n$-dimensional representation $\widetilde{p}$ is 
$$\{2n-2,2n-4,...,2,0,0,-2,...,4-2n,2-2n\}.$$
By restricting to suitable maximal 
compact subgroups, we obtain $\widetilde{p}:\SU(2)\rightarrow K(2n)$. Identify $S^1$ as a maximal torus of $\SU(2)$. The set of eigenvalues of  $\widetilde{p}(\zeta_m)$ is 
\begin{equation*}
\{\zeta_m^{2n-2},\zeta_m^{2n-4},...,\zeta_m^{2},1,1,\zeta_m^{-2},...,\zeta_m^{4-2n},\zeta_m^{2-2n}\}.
\end{equation*}

It is not in general true that two diagonal orthogonal matrices are conjugate in $\SO(2n)$ if and only if they have the same characteristic
polynomial, because the Weyl group of $\SO(2n)$ is only an index $2$ subgroup of $S_n\ltimes (\Z/2\Z)^n$.
However, it is true in the case of matrices for which $1$ is an eigenvalue.
Since $\widetilde{p}(\zeta_{4n-4})$ and $x_n$ both have  characteristic polynomial 
$$(t^2-1)(t^{2n-2}-1)$$ 
and have eigenvalue $1$, it follows that they are conjugate in $K(2n)$. Hence, $x_n$ and $x_n^{-1}$ respectively belong to $w_1(\widetilde{p}(\SU(2)))$ and $w_2(\widetilde{p}(\SU(2)))$ when $n$ is sufficiently large by Lemma 2.2. We are done.
\end{proof}

\begin{thm}For any non-trivial words $w_1,w_2$ and a sufficiently large
$n$ we have
\begin{equation*}
\Spin(n)=w_1(\Spin(n))w_2(\Spin(n)).
\end{equation*}
\end{thm}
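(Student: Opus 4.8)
The plan is to reduce to the orthogonal case already settled, exploiting the canonical double cover $\phi\colon\Spin(n)\to\SO(n)$ and the fact that Got\^o's non-singularity condition lives on the Lie algebra and is therefore insensitive to central isogenies. As in the previous proof I would reduce the odd case to the even one, since the map $\SO(2n)\to\SO(2n+1)$ lifts to $\Spin(2n)\to\Spin(2n+1)$ carrying a maximal torus onto a maximal torus.

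Fix $n$ large. Recall from the $\SO(n)$ proof the principal homomorphism $\widetilde{p}_{\SO}\colon\SU(2)\to\SO(n)$ and the root of unity $\zeta_{4n-4}$, whose image $\widetilde{p}_{\SO}(\zeta_{4n-4})$ is conjugate to an element $x_n\in N_{\SO(n)}(T)$ with $\Ad(x_n)-1$ non-singular on $\mathrm{Lie}(T)$. Since $\SU(2)$ is simply connected, $\widetilde{p}_{\SO}$ lifts uniquely to $\widetilde{p}_{\Spin}\colon\SU(2)\to\Spin(n)$ with $\phi\circ\widetilde{p}_{\Spin}=\widetilde{p}_{\SO}$. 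I would then set $x:=\widetilde{p}_{\Spin}(\zeta_{4n-4})$. By Lemma 2.2, for $n$ sufficiently large $\zeta_{4n-4}\in w_1(\SU(2))$ and $\zeta_{4n-4}^{-1}\in w_2(\SU(2))$, so that $x\in w_1(\widetilde{p}_{\Spin}(\SU(2)))\subseteq w_1(\Spin(n))$ and $x^{-1}\in w_2(\Spin(n))$.

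It remains to produce a maximal torus $T'$ of $\Spin(n)$ normalized by $x$ on whose Lie algebra $\Ad(x)-1$ is non-singular; Got\^o's theorem then finishes the argument exactly as before. Here I would use that $\phi$ is a central isogeny, so $d\phi$ is an isomorphism of Lie algebras intertwining the adjoint actions of $x$ and $\phi(x)=\widetilde{p}_{\SO}(\zeta_{4n-4})$. Writing $\phi(x)=g x_n g^{-1}$ for some $g\in\SO(n)$, the torus $gTg^{-1}$ is normalized by $\phi(x)$ with $\Ad(\phi(x))-1$ non-singular on its Lie algebra; taking $T':=\phi^{-1}(gTg^{-1})$ and transporting through $d\phi$ yields the desired $x\in N_{\Spin(n)}(T')$ with $\Ad(x)-1$ non-singular on $\mathrm{Lie}(T')$.

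All the genuine content was already carried out in the $\SO(n)$ case; the remaining steps are formal. I expect the only (minor) obstacle to be the bookkeeping across $\phi$: verifying that $\phi^{-1}(gTg^{-1})$ is a maximal torus of $\Spin(n)$ and that $d\phi$ honestly identifies the two adjoint representations, so that non-singularity of $\Ad(x)-1$ transfers. No new geometric input beyond the orthogonal case is needed.
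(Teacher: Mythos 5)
Your proposal is correct and follows essentially the same route as the paper: reduce to the even case, lift the principal homomorphism $\SU(2)\to\SO(2n)$ through the double cover using simple connectedness of $\SU(2)$, get the word values from Lemma 2.2, and transfer the non-singularity of $\Ad(\cdot)-1$ on the Lie algebra of a maximal torus through the central isogeny before applying Got\^o's theorem. The only (immaterial) difference is that the paper lifts the specific element $x_n$ to $\hat{x}_n\in\Spin(2n)$ and uses the torus $\hat{T}$ over $T$, whereas you work with $\widetilde{p}_{\Spin}(\zeta_{4n-4})$ directly and conjugate the torus instead.
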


\begin{proof}
Since we have a morphism $\Spin(2n)\rightarrow \Spin(2n+1)$ such that the image of a maximal torus of $\Spin(2n)$ is a maximal torus of $\Spin(2n+1)$, it suffices to deal with $\Spin(2n)$. Consider commutative diagram 
$$\xymatrix{
\SU(2) \ar[rd]^{\widetilde{p}} \ar[r]^{\hat{p}}
&\Spin(2n) \ar@{->>}[d]^{\widetilde{\pi}}\\
&K(2n) }$$ 
where $\widetilde{\pi}$ is the natural projection and $\hat{p}$ is a lift of $\widetilde{p}$. Recall maximal torus $T$ and element $x_n\in N_{K(2n)}(T)$ constructed in the proof of Theorem 2.5. Since $x_n$ and $x_n^{-1}$ respectively belong to $w_1(\widetilde{p}(\SU(2)))$ and $w_2(\widetilde{p}(\SU(2)))$ when $n$ is sufficiently large, $w_1(\Spin(2n))$ and $w_2(\Spin(2n))$ respectively contain $\hat{x}_n$ and $\hat{x}_n^{-1}$ such that $\widetilde{\pi}(\hat{x}_n)=x_n$ by the diagram. Let $\hat{T}$ be the maximal torus of $\Spin(2n)$ such that $\widetilde{\pi}(\hat{T})=T$. We also have $\hat{x}_n\in N_{\Spin(2n)}(\hat{T})$. Consider commutative diagram

$$\xymatrixcolsep{5pc}\xymatrix{
\mathrm{Lie}(\hat{T}) \ar[d]^{d\widetilde{\pi}} \ar[r]^{\Ad(\hat{x}_n)-1} &\mathrm{Lie}(\hat{T})\ar[d]^{d\widetilde{\pi}}\\
\mathrm{Lie}(T) \ar[r]^{\Ad(x_n)-1} & \mathrm{Lie}(T)}$$
Since $d\widetilde{\pi}$ and $\Ad(x_n)-1$ are non-singular, $\Ad(\hat{x}_n)-1$ is also non-singular. We are done by Got\^o's Theorem.
\end{proof}

We end this section with a width two result for any compact semisimple real Lie group $G$.

\begin{thm}
Let $G$ be a compact semisimple real Lie group and $w_1,w_2$ non-trivial words. 
\begin{enumerate}
\item[(i)] If $\zeta_4\in S^1\cap w_1(\SU(2)) \cap w_2(\SU(2)$, then $w_1(G)w_2(G)=G$.
\item[(ii)] If $w_1^2(\SU(2))=\SU(2)$, then $w_1^2(G)=G$.
\end{enumerate}
\end{thm}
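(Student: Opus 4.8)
The plan is to use the same scheme as in Theorems 2.4--2.8: it suffices to produce a single Got\^o element $x\in N_G(T)$ with $x\in w_1(G)$ and $x^{-1}\in w_2(G)$. Indeed, for any such $x$ and any $t\in T$ we have $[x,t]=x\,(t x^{-1}t^{-1})\in w_1(G)\,w_2(G)$, the second factor being a conjugate of $x^{-1}\in w_2(G)$; since every element of $G$ is conjugate to some $[x,t]$ by Got\^o's theorem and $w_1(G)w_2(G)$ is stable under conjugation, this gives $w_1(G)w_2(G)=G$.

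The point of the $\zeta_4$-hypothesis in (i) is that it is much stronger than it appears. By the proof of Lemma 2.2, $w_i(\SU(2))\cap S^1$ is a closed arc $\{e^{i\theta}:|\theta|\le a_i\}$ symmetric about $1$; containing $\zeta_4=e^{i\pi/2}$ forces $a_i\ge \pi/2$, and therefore $\zeta_m\in w_i(\SU(2))$ for every $m\ge 4$, since then $2\pi/m\le\pi/2$. Thus the hypothesis removes exactly the ``large rank'' restriction of the earlier theorems, whose proofs required only that $\zeta_{2h}$ lie in $w_i(\SU(2))$ (forcing $2h$, hence the rank, to be large); now $\zeta_{2h}\in w_i(\SU(2))$ holds at all ranks because $2h\ge 4$.

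I would then run the principal-homomorphism argument uniformly. Reducing to $G$ simple and simply connected is routine: for a product one builds the Got\^o element factorwise and it remains a word value because $w_i(\prod_j G_j)=\prod_j w_i(G_j)$, while central isogenies are harmless since they are surjective and induce isomorphisms on $\mathrm{Lie}(T)$. For simple $G$ with Coxeter number $h$ and principal homomorphism $\widetilde{p}:\SU(2)\to G$, one has $\widetilde{p}(\zeta_{2h})=\exp(2\pi i\,\rho^\vee/h)$, which is conjugate in $G$ to a lift $x\in N_G(T)$ of the Coxeter element; as the Coxeter element has eigenvalues $e^{2\pi i m_j/h}$ with exponents $m_j\in\{1,\dots,h-1\}$ and hence no eigenvalue $1$ on $\mathrm{Lie}(T)$, the operator $\Ad(x)-1$ is non-singular on $\mathrm{Lie}(T)$ and $x$ is a Got\^o element. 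Since $\zeta_{2h},\zeta_{2h}^{-1}\in w_i(\SU(2))$, we get $x\sim\widetilde{p}(\zeta_{2h})\in w_1(G)$ and $x^{-1}\sim\widetilde{p}(\zeta_{2h}^{-1})\in w_2(G)$, completing (i).

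For (ii) I would apply (i) with both words equal to $w_1$, after checking that $w_1(\SU(2))w_1(\SU(2))=\SU(2)$ already forces $\zeta_4\in w_1(\SU(2))$. Writing $w_1(\SU(2))=\{g:\mathrm{angle}(g)\le a\}$ with $a$ the arc radius from Lemma 2.2, the product of two elements of angles $\le a$ has angle at most $2a$, so representing $-I$ (angle $\pi$) requires $2a\ge\pi$, i.e. $\zeta_4\in w_1(\SU(2))$. The main obstacle is the uniform input that $\widetilde{p}(\zeta_{2h})$ is conjugate to a Coxeter-element lift: for the classical groups this is precisely the eigenvalue computation already carried out in Theorems 2.4, 2.6, 2.7, 2.8 (including the delicate $\SO(2n)$ case, where one must use that $1$ is an eigenvalue because the characteristic polynomial alone does not pin down the class), whereas for the exceptional groups one has no explicit matrices and must appeal to the Kostant--Springer theory of regular elements identifying $\exp(2\pi i\rho^\vee/h)$ with the Coxeter class.
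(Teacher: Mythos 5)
Your proof is correct in substance, but at the key step it takes a genuinely different route from the paper's. You both begin identically: the $\zeta_4$ hypothesis plus the arc structure from Lemma 2.2 yields $\zeta_m^{\pm 1}\in w_i(\SU(2))$ for all $m\ge 4$, and (ii) reduces to (i) by subadditivity of the eigenvalue angle. The divergence is in how this is propagated to an arbitrary compact semisimple $G$. The paper does \emph{not} rerun the principal-homomorphism argument inside $G$; it observes that the enlarged supply of roots of unity makes the $\SU(n)$ argument work for \emph{all} $n\ge 2$, and then invokes the Borel--de Siebenthal-type fact that $G$ contains an equal-rank semisimple subgroup $H$ all of whose simple factors are of type $A$. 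Since a maximal torus of $H$ is one of $G$, every element of $G$ is conjugate into $H$, and $w_1(H)w_2(H)=H$ follows from the type-$A$ case already proved; the argument never leaves the classical computations of Section 2. Your route instead treats each simple factor uniformly via the principal $\SU(2)$ and the element $\widetilde{p}(\zeta_{2h})$, which you must identify with a lift of the Coxeter element; as you yourself flag, for the exceptional groups this requires Kostant's theorem on the principal element $\exp(2\pi i\rho^\vee/h)$, an external input the paper deliberately avoids. Your argument does go through --- the only point to tighten is that Kostant's statement is usually given in the adjoint group, so in the simply connected group you may only get conjugacy of $\widetilde{p}(\zeta_{2h})$ to a central translate $n_c z$ of a Coxeter lift; this is harmless, since $n_c z\in N_G(T)$ still acts as the Coxeter element on $\mathrm{Lie}(T)$ and hence is still a Got\^o element lying in $w_1(G)$. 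In short: your proof buys uniformity across all simple types at the cost of heavier machinery, while the paper's equal-rank type-$A$ reduction is the more elementary and self-contained of the two.
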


\begin{proof}
(i) Since $S^1\cap w_i(\SU(2))$ ($i=1,2$) is a connected closed arc, symmetric about the $x$-axis of the complex plane (Lemma 2.2), we obtain
$$\zeta_{2n}^{\pm}\in S^1\cap w_1(\SU(2)) \cap w_2(\SU(2)$$
 for all $n\geq 2$ by the assumption. Hence, $w_1(\SU(n))w_2(\SU(n))=\SU(n)$ for all $n\geq 2$ by the proof of Theorem 2.3. Since every element of $G$ is conjugate to some element in a maximal torus and $G$ contains an equal rank semisimple subgroup $H$ with type A simple factors, we are done.
 
(ii) There exists $x_1,x_2\in w_1(\SU(2))$ such that $x_1x_2=-1$. We may assume $x_1,x_2\in S^1$. Then one sees easily that $\zeta_4\in S^1\cap w_1(\SU(2))$ by Lemma 2.2. We obtain $w_1^2(G)=G$ by (i)
\end{proof}

\section{Non-compact groups}

In this section we study Waring type problems for split semisimple Lie groups 
$G$ over a local field of characteristic $0$ ($\R$, $\C$, or finite extension of $\Q_p$) and prove Theorem~\ref{T12}. 
A key result we need (related to the Thompson 
conjecture) was proved by Ellers and Gordeev \cite{EG1,EG2,EG3}:

\begin{thm} 
\label{T31}
Let $G$ be a Chevalley group over a 
field $F$ associated to complex simple Lie algebra. Let $g_1$ and $g_2$ be two regular semisimple elements in $G$ 
from a maximal split torus and let $C_1$ and $C_2$ be the conjugacy classes 
of $g_1$ and $g_2$, respectively. Then
\begin{equation*}
 G\setminus Z(G)\subseteq C_1C_2.
\end{equation*}
\end{thm}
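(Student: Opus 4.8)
The plan is to reduce the assertion to an explicit solvability problem inside a Borel subgroup, and then to use the regularity of $g_1,g_2$ to solve that problem over $F$ itself rather than merely over $\overline{F}$. First I would observe that $C_1C_2$ is invariant under simultaneous conjugation, $h(ab)h^{-1}=(hah^{-1})(hbh^{-1})$, so it is a normal subset of $G$; consequently I am free to conjugate both the $g_i$ and the target $g$ without loss of generality. So assume $g_1=t_1$ and $g_2=t_2$ lie in a fixed maximal split torus $T$ and are regular semisimple, i.e. $\alpha(t_i)\neq 1$ for every root $\alpha$. The claim $g\in C_1C_2$ for non-central $g$ is then equivalent to producing a conjugate $a$ of $t_1$ with $a^{-1}g\in C_2$, that is, to matching the product $g=a\cdot(a^{-1}g)$ against the two prescribed regular semisimple classes.

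Before building $a$, I would record the dimension heuristic over $\overline{F}$. Since $t_i$ is regular semisimple its centralizer is $T$, so $\dim C_i=\dim G-\rk(\g)$, and the multiplication map $C_1\times C_2\to G$ has source of dimension $2(\dim G-\rk(\g))$, which is at least $\dim G$ because $\dim G\geq 2\,\rk(\g)$ for every simple $\g$. Hence the map is dominant and the statement is plausible. But dominance yields only a dense image, so over a non-closed field $F$ the whole content is \emph{rationality}: exhibiting an honest $F$-point in the fibre over each non-central $g$.

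That rationality I would extract from a Gauss (Bruhat) decomposition. Fix opposite unipotent radicals $U,U^-$ with $B=TU$, and let $\Omega=U^-TU$ be the big cell. The decisive use of regularity is that the twisted commutator morphism $U^-\to U^-$, $v\mapsto v\,(t_1vt_1^{-1})^{-1}$, is bijective over $F$, because in root coordinates it is triangular with diagonal entries $1-\alpha(t_1)^{\pm 1}$, all nonzero; the same holds on $U$ for $t_2$. For $g\in\Omega$ I would write a general conjugate $a=xt_1x^{-1}$ in Gauss form and solve $g=a\cdot(a^{-1}g)$ coordinate by coordinate. Here the Gauss torus part of $a$ is genuinely free (it is not a conjugacy invariant: already in $\SL_2$ the unipotent coordinates absorb the trace constraint, so any nonzero diagonal entry occurs), while the unipotent coordinates encode the characteristic-polynomial constraints that pin $a$ to $C_1$ and $a^{-1}g$ to $C_2$; regularity makes the leading blocks of this system invertible, so it is solvable recursively over $F$. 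One uses that a regular semisimple class from the split torus is determined by its characteristic polynomial, equivalently by its intersection $W\cdot t_i$ with $T$.

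The main obstacle is twofold, and it is exactly what separates this from the easy algebraically closed case. First, the triangular system must be shown solvable over $F$, not just over $\overline{F}$; this is where the hypotheses $\alpha(t_i)\neq 1$ are indispensable, guaranteeing invertible diagonal blocks, and where one wants $F$ sufficiently large — harmless in our applications since $F$ is infinite. Second, and more delicate, is the passage from $\Omega$ to \emph{all} non-central $g$: an arbitrary element need not be conjugate into the big cell over $F$, so the classes beyond the reach of the Gauss construction — the small or near-central ones — require separate treatment, typically by induction on $\rk(\g)$ through Levi subgroups or by a direct rank-one ($\SL_2$, $\PGL_2$) computation. These boundary cases also explain the exclusion of $Z(G)$: a central $g=ab$ would force $a^{-1}g$ to be $g$ times the inverse of a $C_1$-element, a rigid condition that a generic pair $(t_1,t_2)$ cannot satisfy, so central elements genuinely fall outside $C_1C_2$.
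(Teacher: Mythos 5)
First, note that the paper does not prove this statement at all: Theorem~3.1 is quoted verbatim from the three Ellers--Gordeev papers \cite{EG1,EG2,EG3} on ``Gauss decomposition with prescribed semisimple part,'' so there is no in-paper argument to match yours against. Your sketch does correctly identify the strategy underlying those papers: regularity of $t_1$ makes the twisted conjugation map $v\mapsto v\,(t_1vt_1^{-1})^{-1}$ bijective on $U^-$ (and likewise on $U$ for $t_2$), whence $U^-t_1\subseteq C_1$ and $t_2U\subseteq C_2$, so $C_1C_2$ contains every conjugate of $U^-t_1t_2\,U$; the problem then reduces to showing that every non-central $g$ is conjugate over $F$ into $U^-sU$ with the torus part $s=t_1t_2$ \emph{prescribed in advance}.

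That last reduction is, however, the entire content of the theorem, and your proposal dispatches it in one sentence about ``boundary cases requiring separate treatment.'' This is a genuine gap, for two reasons. First, $s=t_1t_2$ need not be regular --- it can even be the identity (take $t_2=t_1^{-1}$, which is the Thompson-conjecture situation) --- so no triangularity or invertible-diagonal argument is available for the element whose Gauss form must be achieved; the difficulty is not confined to ``small or near-central'' classes but is present for every non-central $g$ and every prescribed $s$. Second, your proposed recursive solution of $g=a\cdot(a^{-1}g)$ ``coordinate by coordinate'' is not actually a triangular system: forcing $a^{-1}g$ into the specific class $C_2$ imposes $\mathrm{rk}(\mathfrak{g})$ polynomial conditions that do not decouple along root coordinates, and nothing in the sketch shows these are solvable over $F$. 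Establishing the prescribed-semisimple-part Gauss decomposition for all non-central elements required a type-by-type analysis spanning three papers (classical groups, exceptional groups, twisted groups); it cannot be recovered from the dimension heuristic plus the big-cell computation you give. If you want a self-contained proof you must either reproduce that case analysis or cite \cite{EG1,EG2,EG3} as the paper does.
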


\vspace{.2in}

In order to prove Theorem~\ref{T12} we also need the following.

\begin{lem} 
\label{L32}
Let $F$ be a field that contains either $\R$ or $\Q_p$ for some $p$ and $w$ a non-trivial word of $d$ letters. Denote by $H$ the group $\SL_2(F)$ and by $\mathrm{Tr}$. the trace map on $H$.  Define the discriminant on $H^d$ as
\begin{equation*}
\Delta:=(\mathrm{Tr}\circ w)^2-4.
\end{equation*}
Then $\Delta(H^d)\cap (F^\times)^2$ is an infinite set.
\end{lem}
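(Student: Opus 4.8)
The plan is to show that the word map on $\SL_2(F)$ produces infinitely many values of the trace, and that among these we can find infinitely many for which $\Delta = (\mathrm{Tr}\circ w)^2 - 4$ is a nonzero square in $F$. The key point is that $\mathrm{Tr}\circ w \colon H^d \to F$ is a nonconstant polynomial map (in the matrix entries), so its image is ``large'', and we then need to intersect the resulting values of $\Delta$ with the squares $(F^\times)^2$. First I would recall that for a non-trivial word $w$, the word map on $\SL_2$ is dominant onto $\SL_2$ by Borel's theorem, so in particular $\mathrm{Tr}\circ w$ is a nonconstant regular function on $H^d$; hence its image contains a nonempty (real or $p$-adic) open subset $U$ of the affine line, because $F$ contains $\R$ or $\Q_p$ and a nonconstant polynomial map between varieties over such a field has image with nonempty interior in the analytic topology by the implicit/submersion argument at a smooth point where the differential is nonzero.

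The second step is to exploit that $U$ being open in the $\R$- or $\Q_p$-topology of $F$ forces $\{\tau^2 - 4 : \tau \in U\}$ to meet $(F^\times)^2$ in an infinite set. For this I would argue locally: pick $\tau_0 \in U$ with $\tau_0 \neq \pm 2$, so that $\Delta_0 = \tau_0^2 - 4 \neq 0$. The map $\tau \mapsto \tau^2 - 4$ is itself a local homeomorphism near $\tau_0$ (its derivative $2\tau_0$ is nonzero), so it sends a small neighborhood of $\tau_0$ in $U$ onto an open neighborhood $V$ of $\Delta_0$ in $F$. Now in both $\R$ and $\Q_p$, the set of squares $(F^\times)^2$ is open and contains an infinite subset of every nonempty open set: over $\R$ the squares are exactly the positive reals, an open half-line; over $\Q_p$ the squares form an open subgroup of finite index in $F^\times$. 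Hence $V \cap (F^\times)^2$ is a nonempty open set, which is automatically infinite since $F \supseteq \R$ or $\Q_p$ is infinite and its topology has no isolated points. Pulling this back, $\Delta(H^d)\cap (F^\times)^2 \supseteq V \cap (F^\times)^2$ is infinite, as desired.

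I expect the main obstacle to be making precise that the image of the nonconstant polynomial $\mathrm{Tr}\circ w$ genuinely contains an open set in the analytic topology rather than merely being Zariski-dense. The clean way is to locate a point of $H^d$ at which $\mathrm{Tr}\circ w$ is a submersion onto $\A^1$, i.e.\ where its differential is nonzero; such a point exists precisely because the map is nonconstant and $H^d$ is smooth and irreducible, so the differential is nonzero on a dense open set. At any such point the inverse/implicit function theorem over the local field $F$ guarantees that the image contains an open neighborhood of the value, which is exactly what the argument above requires. One should also check the harmless genericity condition that we can choose the open set to avoid the finitely many bad values $\tau = \pm 2$; since $U$ is open and infinite this is immediate. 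The remaining verifications — that squares are open in $\R$ and $\Q_p$ and that nonempty open subsets of $F$ are infinite — are standard facts about local fields and require no computation.
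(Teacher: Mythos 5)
There is a genuine gap at the final step. You correctly reduce to showing that the set of trace values $U := (\mathrm{Tr}\circ w)(H^d)$ contains a nonempty open subset of $F$ in the real or $p$-adic topology, but your claim that $(F^\times)^2$ ``contains an infinite subset of every nonempty open set'' is false, and it is precisely where the difficulty of the lemma lives. Over $\R$ the squares are the positive reals, so the open set $V$ you produce around $\Delta_0=\tau_0^2-4$ is disjoint from $(\R^\times)^2$ whenever $\tau_0\in(-2,2)$; over $\Q_p$ the squares form a proper open subgroup of $\Q_p^\times$ (of index $4$, or $8$ when $p=2$), so each nontrivial coset is also open, and a small neighborhood of a non-square contains no squares at all. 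Thus your argument only shows that $\Delta(H^d)$ contains a nonempty open set, not that it meets $(F^\times)^2$. Concretely, over $\R$ the assertion to be proved is that $w(\SL_2(\R))$ contains hyperbolic elements (trace of absolute value greater than $2$); knowing that the image of the word map contains \emph{some} nonempty open set does not rule out that this open set consists entirely of elliptic elements.

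The paper closes exactly this gap by a global argument: it constructs a non-constant morphism $f\colon\A^1\to\SL_2^d$ over $\Q$ such that $P(x):=\mathrm{Tr}(w(f(x)))$ is a non-constant polynomial, and then studies the projective curve $y^2=P(x)^2-4$ near the point at infinity $(1:1:0)$, which is a smooth rational point because the leading term of $P(x)^2-4$ is the perfect square $c_k^2x^{2k}$. The real or $p$-adic implicit function theorem at that point yields infinitely many $F$-points, i.e.\ infinitely many $x\in F$ with $P(x)^2-4\in(F^\times)^2$; intuitively, for $x$ of large (archimedean or $p$-adic) absolute value one has $P(x)^2-4=c_k^2x^{2k}(1+\varepsilon)$ with $1+\varepsilon$ itself a square. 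To salvage your local approach you would have to replace the false openness claim by an argument of this kind that actually exhibits trace values $\tau$ with $\tau^2-4$ a nonzero square.
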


\begin{proof} 
If $A\in \SL_2(\Q)$ and $N\in M_2(\Q)$ has trace $0$, then 
$$f_{A,N}\colon t\mapsto (I_2+tN)A$$ 
defines a non-constant morphism $\A^1\to \SL_2$, and the tangent spaces
to the curve $f_{A,N}(\A^1)$ at $A$, as $N$ ranges over trace $0$ matrices, span the tangent space to $\SL_2$ at $A$.
Thus, for every $(A_i)\in \SL_2(\Q)^d$ and every proper subvariety $V$ of $\SL_2^d$ which contains $(A_i)$ as a non-singular point,
there exists a non-constant morphism $f\colon \A^1\to \SL_2^d$ defined over $\Q$ such that $f(0) = (A_i)$,
and $f(\A^1)$ is not contained in $V$.
Any non-constant morphism $\pi\colon \SL_2^d\to \A^1$ defined over $\Q$ is generically smooth, and $\SL_2(\Q)^d$ is Zariski-dense in $\SL_2$,
so there always exists $(A_i)\in \SL_2(\Q)^d$ which is a non-singular point of the fiber of $\pi$ to which is belongs.  Thus, there exists
$f\colon \A^1\to \SL_2^d$ defined over $\Q$ so that the composition $\pi\circ f$ is a non-constant morphism $\A^1\to \A^1$.

We apply this general observation to the case $\pi := \mathrm{Tr}\circ w$ to obtain a morphism $f$ and a polynomial $P(x)\in \Q[x]$, not necessarily zero, such that
$\mathrm{Tr}\circ w(f(x)) \equiv P(x)$.  To prove the lemma, it suffices to show that $y^2 = P(x)^2-4$ has infinitely many solutions in the field $F$.
We write $P(x) = c_0 + \cdots + c_k x^k$, where $c_k\neq 0$ and $k\ge 1$.
Consider the curve $X$ over $\Q$ given in projective coordinates by 
$$c_k^2 u^{2k} - \Bigl(\sum_{i=0}^k c_i v^i w^{k-i}\Bigr)^2 + 4w^{2k}.$$
As $P := (1:1:0)$ is a non-singular point, by the (real or $p$-adic) implicit function theorem, there is an infinite (real or $p$-adic) neighborhood of $P$
in $X(F)$.   Letting $y := c_k u^k/w^k$ and $x := v/w$, this implies 
that $y^2 = P(x)^2-4$ has infinitely many solutions in $F$.

\end{proof}

\noindent \textbf{\textit{Proof of Theorem~\ref{T12}}:}

\begin{proof} In light of Theorem~\ref{T31}, it suffices to prove the theorem for $F=\R$ or $\Q_p$. Suppose $w$ is a non-trivial word of $d$ letters. Let $D$ be the group of diagonal matrices in $\SL_2(F)$. 
Any Chevalley group $G$ over $F$ is the commutator subgroup of the group of $F$-rational points of a corresponding simple algebraic group $G_F$, and we have the following commutative diagram of algebraic groups over $F$:

\begin{equation}
\xymatrix{
\SL_{2,F} \ar[d]^{\pi_1} \ar[r]^{\tilde{p}} &G_{F}\ar[d]^{\pi_2}\\
\PGL_{2,F} \ar[r]^p & G_{F}^{\ad}}
\end{equation}
where $\pi_1$ and $\pi_2$ are adjoint quotient maps, $G_{F}^{\ad}$ is the adjoint group of $G_{F}$, $p$ is the principal homomorphism associated to a system of simple roots \cite[$\mathsection2$]{Se}, and $\tilde{p}$ is a lifting of $p$. Since $w(\SL_2(F))$ contains infinitely many elements in $D$ by Lemma~\ref{L32} and the image of a generic element of $\pi_1(D)\subset\PGL_2(F)$ under $p$ is regular \cite[$\mathsection2.3$]{Se}, $w(\tilde{p}(\SL_2(F)))$ contains  a regular split semisimple element.
This semisimple element belongs to $w(G)$ since $\SL_2(F)$ is equal to its commutator subgroup \cite{Th}.
Therefore, we obtain
\begin{equation*}
G\setminus Z(G)\subseteq w_1(G)w_2(G)\end{equation*}
for non-trivial words $w_1$ and $w_2$ by Theorem~\ref{T31}. 
\end{proof}

We now state some easy consequences of Theorem~\ref{T12}.

\begin{cor}
\label{C33}Let $w_1$,$w_2$,$w_3$ be non-trivial words and $G$ a Chevalley group over a local field of characteristic $0$ associated to complex semisimple Lie algebra $\mathfrak{g}$. Then
\begin{equation*}
G=w_1(G)w_2(G)w_3(G).
\end{equation*}
\end{cor}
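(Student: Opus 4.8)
The plan is to deduce Corollary~\ref{C33} from Theorem~\ref{T12} by reducing from the semisimple case to the simple case and then handling the central elements that Theorem~\ref{T12} leaves untouched. First I would reduce to the simple case: a Chevalley group $G$ associated to a semisimple Lie algebra $\mathfrak{g} = \mathfrak{g}_1 \oplus \cdots \oplus \mathfrak{g}_r$ splits (up to a central, hence harmless, quotient) as a product $G_1 \times \cdots \times G_r$ of Chevalley groups associated to the simple summands $\mathfrak{g}_i$. Since $Z(G) = Z(G_1) \times \cdots \times Z(G_r)$ and word maps respect direct products --- $w(G_1 \times \cdots \times G_r) = w(G_1) \times \cdots \times w(G_r)$ --- it suffices to establish the width-$3$ statement for each simple factor and then multiply coordinatewise. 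So the real content is the simple case, which I now assume.

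For a simple Chevalley group $G$, Theorem~\ref{T12} already gives $G \setminus Z(G) \subseteq w_1(G)w_2(G)$, so the only elements that might fail to be products of three word-values are the central ones. The key observation is that any nontrivial word value can be used to push a central element into the non-central part. Concretely, I would first note that $w_3(G) \not\subseteq Z(G)$: by Theorem~\ref{T12} applied (say) with $w_1 = w_2 = w_3$, the set $w_3(G)$ contains a regular split semisimple element $s$, which is certainly non-central. Now let $z \in Z(G)$ be arbitrary. I want to write $z$ as $g_1 g_2 g_3$ with $g_i \in w_i(G)$. Choose $g_3 = s^{-1} \in w_3(G)$ (using that $w_3(G)$ is closed under inversion, since $w_3^{-1}$ is conjugate to a value of $w_3$; more directly one uses that $w_3(G)$ contains both $s$ and a suitable inverse-type element). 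Then I need $g_1 g_2 = z s$, and since $z s$ is non-central (as $z$ is central and $s$ is not), Theorem~\ref{T12} gives $zs \in w_1(G)w_2(G)$, so $zs = g_1 g_2$ with $g_i \in w_i(G)$. Hence $z = g_1 g_2 g_3 \in w_1(G)w_2(G)w_3(G)$, completing the central case. Combined with the non-central case, this yields $G = w_1(G)w_2(G)w_3(G)$ for simple $G$.

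The main obstacle, and the point requiring care, is the claim that $w_3(G)$ contains a non-central element together with its inverse, or more precisely that for a central $z$ one can find $g_3 \in w_3(G)$ with $z g_3^{-1}$ non-central. The cleanest route is to observe that by Theorem~\ref{T12}'s proof $w_3(G)$ contains a regular split semisimple element $s$; since such an $s$ generates a non-central coset modulo $Z(G)$, the element $z s^{-1}$ (equivalently $zs$ after relabeling) is non-central for every $z \in Z(G)$, because $Z(G)$ consists of scalars while $s$ has distinct nontrivial eigenvalue data. One must only verify that $w_3(G)$ is stable under the map $g \mapsto g^{-1}$, or else argue directly: since $w_3$ is nontrivial, $w_3(G)$ also contains an element conjugate to $s^{-1}$ (apply Theorem~\ref{T12} with the inverse word $w_3^{-1}$, or use that the word map image is invariant under inversion up to conjugacy and that conjugacy does not affect membership in $w_3(G)$). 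Granting this, the argument is complete, and the passage from simple to semisimple via the direct-product structure and the centrality of the kernel presents no further difficulty.
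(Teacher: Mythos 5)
Your argument is essentially correct and follows the same route as the paper: reduce via the universal central extension to the simple (universal) factors, invoke Theorem~\ref{T12} to cover $G\setminus Z(G)$, and use a non-central value of $w_3$ to move central elements out of $Z(G)$. The paper's wording of the last step is slightly different but equivalent in substance: it observes that $Z(\widetilde{G_i})$ is finite while $w_3(\widetilde{G_i})$ has non-empty interior (hence is infinite), so for \emph{every} $g$ there is $c\in w_3(G)$ with $gc^{-1}\notin Z(G)$, whence $g=(gc^{-1})c$. One correction, however: the point you single out as the main obstacle --- that $w_3(G)$ is closed under inversion --- is both unnecessary and not correctly justified. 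The assertion that $w_3^{-1}$ is conjugate to a value of $w_3$ (equivalently, that $w_3^{-1}$ is conjugate to $w_3$ in the free group, or that word images are automatically symmetric) is false for general words, so you should not lean on it. But you do not need $s^{-1}\in w_3(G)$ at all: take $g_3=s$ itself. Then $zg_3^{-1}=zs^{-1}$ is non-central, since $z$ is central and $s$ (hence $s^{-1}$) is not, so Theorem~\ref{T12} gives $zs^{-1}=g_1g_2$ with $g_i\in w_i(G)$ and $z=g_1g_2s\in w_1(G)w_2(G)w_3(G)$. Indeed, any single non-central element of $w_3(G)$ suffices here, and its existence already follows from the statement of Theorem~\ref{T12} (if $w_3(G)\subseteq Z(G)$ then $G\setminus Z(G)\subseteq w_3(G)^2\subseteq Z(G)$, forcing $G$ abelian). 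With that one-line repair the proof is complete and matches the paper's.
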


\begin{proof} Let $\widetilde{G}$ be the universal group \cite[p.45]{St} associated to $\mathfrak{g}$. We have central extension $\pi:\widetilde{G}\rightarrow G$ \cite[$\mathsection7$]{St} and $\widetilde{G}$ is the direct product of universal groups $\widetilde{G_i}$ associated to simple factors of $\mathfrak{g}$. It suffices to prove the corollary for $\widetilde{G_i}$.   Since the center $Z(\widetilde{G_i})$ is finite and $w_3(\widetilde{G_i})$ has non-empty interior (same proof as \cite[Cor. 5]{La}), Corollary~\ref{C33} follows easily from Theorem~\ref{T12}.\end{proof}

\begin{cor}Let $w_1$,$w_2$ be non-trivial words and $G^{\ad}:=G/Z(G)$ where $G$ is a Chevalley group over $F$ associated to complex semisimple Lie algebra $\mathfrak{g}$. Then
\begin{equation*}
G^{\ad}=w_1(G^{\ad})w_2(G^{\ad}).
\end{equation*}
\end{cor}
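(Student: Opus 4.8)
The plan is to reduce the semisimple, adjoint situation to the simple, centerless situation already handled by Theorem~\ref{T12}, exploiting the fact that word maps commute with direct products. The point is that $G^{\ad}$ has trivial center \emph{and} splits as a direct product of simple adjoint factors, each of which is again centerless.

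First I would record the structural decomposition, following the proof of Corollary~\ref{C33}. Write $\mathfrak{g} = \mathfrak{g}_1 \oplus \cdots \oplus \mathfrak{g}_r$ as a sum of simple ideals and let $\widetilde{G}$ be the universal Chevalley group of $\mathfrak{g}$, so that $\widetilde{G} = \widetilde{G_1} \times \cdots \times \widetilde{G_r}$ with $\widetilde{G_i}$ the universal Chevalley group of $\mathfrak{g}_i$. Since $G = \widetilde{G}/K$ for a central $K \subseteq Z(\widetilde{G})$ and $Z(\widetilde{G}) = \prod_i Z(\widetilde{G_i})$, one gets
\[
G^{\ad} = G/Z(G) = \widetilde{G}/Z(\widetilde{G}) = \prod_{i=1}^r G_i^{\ad}, \qquad G_i^{\ad} := \widetilde{G_i}/Z(\widetilde{G_i}),
\]
where each $G_i^{\ad}$ is the adjoint Chevalley group of the simple Lie algebra $\mathfrak{g}_i$ and satisfies $Z(G_i^{\ad}) = \{1\}$. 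Because $G_i^{\ad}$ is centerless, Theorem~\ref{T12} upgrades to the equality $G_i^{\ad} = w_1(G_i^{\ad}) w_2(G_i^{\ad})$ for every $i$.

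Finally I would assemble these via the elementary fact that a word map is evaluated coordinatewise, so $w(\prod_i H_i) = \prod_i w(H_i)$ for any word $w$; since multiplication in a direct product is also coordinatewise, this yields
\[
w_1(G^{\ad}) w_2(G^{\ad}) = \prod_{i=1}^r \bigl(w_1(G_i^{\ad}) w_2(G_i^{\ad})\bigr) = \prod_{i=1}^r G_i^{\ad} = G^{\ad}.
\]
The one step I would treat with care, and which carries all the content, is the chain of identifications $G/Z(G) = \widetilde{G}/Z(\widetilde{G}) = \prod_i G_i^{\ad}$: here one uses that $\widetilde{G}$ is perfect to see that the preimage of $Z(G)$ equals $Z(\widetilde{G})$ (so the adjoint quotient is independent of the chosen isogeny form $G$), together with the structure theory of Chevalley groups that splits the adjoint form into its simple adjoint factors. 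This is precisely what makes each factor centerless, and hence what converts the inclusion $G_i^{\ad} \setminus Z(G_i^{\ad}) \subseteq w_1(G_i^{\ad}) w_2(G_i^{\ad})$ of Theorem~\ref{T12} into the equality needed above.
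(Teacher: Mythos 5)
Your proof is correct and follows essentially the same route as the paper's: pass to the universal group $\widetilde{G}$, use $G/Z(G)=\widetilde{G}/Z(\widetilde{G})=\prod_i \widetilde{G_i}/Z(\widetilde{G_i})$, and apply Theorem~\ref{T12} to each centerless simple adjoint factor. You merely spell out in more detail than the paper the coordinatewise behaviour of word maps on direct products and the identification of the adjoint quotients.
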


\begin{proof} Let $\widetilde{G}$ be the universal group associated to $\mathfrak{g}$. We have central extension $\pi:\widetilde{G}\rightarrow G$ and $\widetilde{G}$ is the direct product of universal groups $\widetilde{G_i}$ associated to simple factors of $\mathfrak{g}$. Since $\widetilde{G}/Z(\widetilde{G})=G/Z(G)$, it suffices to prove the corollary for $\widetilde{G_i}$ which is done by Theorem~\ref{T12}.
\end{proof}

\section{Chevalley groups}

The method we used in $\mathsection3$ leads to the proof
of Theorem 1.3, given below.

\begin{proof} (i) Applying Theorem 3.1 it suffices to show that 
$w_1(G)w_2(G)$ and $w_3(G)w_4(G)$ contain split regular semisimple elements. 
By the principal homomorphism and diagram (5), it suffices to show that 
\begin{equation*}
w_1(\SL_2(F))w_2(\SL_2(F))
\end{equation*}
contains an infinite set of split semisimple elements of $\SL_2(F)$ of different 
traces. Since $F$ is infinite and words are non-trivial, $w_1(\SL_2(F))$ and $w_2(\SL_2(F))$ both contain infinitely many semisimple elements of different traces. If either  $w_1(\SL_2(F))$ or $w_2(\SL_2(F))$ contains infinitely many split semisimple elements of different traces, then we are done. Otherwise, let $C_1$ and $C_2$ be conjugacy classes respectively of non-split semisimple elements of $w_1(\SL_2(F))$ and $w_2(\SL_2(F))$. Then the diagonal matrix $\mathrm{diag}(\lambda,\lambda^{-1})\in C_1C_2$ if and only if $-\lambda\in\chi(C_1)\chi(C_2)$ \cite[Lemma 6.2]{VW}, where $\chi(C_i)$ is the set of $(2,1)$ entries of $C_i$ (\emph{corner invariant}) for $i=1,2$ \cite[$\mathsection3$]{VW}. Since $F$ is infinite, the corner invariants $\chi(C_1),\chi(C_2)$ are infinite and we are done.\\

(ii) An $n$ by $n$ matrix $M$ is said to be \emph{cyclic} if every Jordan block of $M$ is of multiplicity one. Let $G$ be $\SL_n(F)$ with $n>2$. A conjugacy class $C$ of $G$ is cyclic if every element of $C$ is cyclic. If $C_1,C_2,C_3$ are cyclic conjugacy classes of $G$, then any non-scalar element of $G$ belongs to the product $C_1C_2C_3$ \cite[Theorem 3]{Lev}. Therefore, it suffices to show that $w_1(G)$ contains a cyclic element. Since $F$ is infinite and $w_1$ is non-trivial, $w_1(G)$ contains a regular semisimple (and therefore cyclic) element $g_1$ by the principal homomorphism and diagram (5).  \end{proof}
\medskip

Let us now prove Theorem 1.4. 

\begin{proof}
(i) Since $G$ is a Chevalley group over
an infinite field, it has a maximal split torus $T$. Recall that
$w_1=x^{m}$ and $w_2=y^{n}.$ 
It is easy to see that $w_1(T)$ and $w_2(T)$ both contain split regular 
semisimple elements. Thus, for $i=1,2$, $w_i(G)$ contains a conjugacy 
class of $C_i$ of a split regular semisimple element. 
By Theorem 3.1, we obtain 
$G \setminus Z(G) \subseteq w_1(G)w_2(G)$, proving the
result.\\

(ii) Let $\widetilde{G}$ be the universal group associated to $\mathfrak{g}$. We have central extension $\pi:\widetilde{G}\rightarrow G$  and $\widetilde{G}$ is the direct product of universal groups $\widetilde{G_i}$ associated to simple factors of $\mathfrak{g}$. We just need to deal with the case that $G$ is universal and $\mathfrak{g}$ is simple. Since $G \setminus Z(G)\subset w_1(G)w_2(G)$ by (i), it suffices to show $Z(G)\subset w_1(G)w_2(G)$.  Let $\Lambda$ and $R$ be respectively the weight lattice and root lattice of $\mathfrak{g}$. We have \cite[p.45]{St}
\begin{equation*}
Z(G)=\mathrm{Hom}(\Lambda/R, F^*)
\end{equation*}
and
\begin{center}
\begin{tabular}{c|c} 
Type of $\mathfrak{g}$  & $\Lambda/R$ \\ \hline

$A_n=\mathfrak{sl}(n+1)$ ($n\geq 1$) & $\Z/(n+1)\Z$ \\ 

$B_n=\mathfrak{so}(2n+1)$ ($n\geq 2$) & $\Z/2\Z$ \\

$C_n=\mathfrak{sp}(2n)$ ($n\geq 3$) & $\Z/2\Z$ \\ 

$D_n=\mathfrak{so}(2n)$ ($n\geq 5$, odd) & $\Z/4\Z$ \\ 

$D_n=\mathfrak{so}(2n)$ ($n\geq 4$, even) & $\Z/2\Z\times\Z/2\Z$ \\ 

$E_6$  &  $\Z/3\Z$ \\ 

$E_7$  & $\Z/2\Z$ \\ 

$E_8$  & $\{1\}$ \\ 

$F_4$  & $\{1\}$ \\ 

$G_2$  & $\{1\}$ \\ 
\end{tabular}
\end{center}

Since $|\Lambda/R|$ is odd if $\mathfrak{g}=\mathfrak{sl}(2n+1),E_6,E_8,F_4,G_2$ ($n\geq1)$, every element of $Z(G)$ is a square in these cases. If $\mathrm{char}(F)=2$, then $Z(G)$ is trivial for the remaining $\mathfrak{g}$ and (ii) is true. Assume $\mathrm{char}(F)\neq 2$.\\

Case $\mathfrak{g}=\mathfrak{sl}(2n)$ ($n\geq1$):\\ 
We have $G=\SL_{2n}(F)$ and 
$Z(G)=\{rI_{2n}:~ r\in F,~r^{2n}=1\}$.
Define $J_r:=\begin{pmatrix}
      0 &1\\
      r &0 
\end{pmatrix}$ whenever $r\in F$ is a $2n$-th (not necessary primitive) root of unity.
When $n=1$, the non-trivial center of $G$ is a square since
\begin{equation*}
   \begin{pmatrix}
      -1 &0\\
      0 & -1
\end{pmatrix}=J_{-1}^2.
\end{equation*}
When $n>1$, every center element of $\SL_{2n}(F)$ is a product of $2$ squares since
\begin{equation*}
   rI_{2n}=
\begin{pmatrix}
      J_r &... &0  &0\\
      \vdots &\ddots &\vdots &\vdots\\
      0  &... &J_r &0\\
      0  &...&0 &J_{r}
\end{pmatrix}^2
\end{equation*}
if $(-r)^n=1$ and 
\begin{equation*}
   rI_{2n}=\begin{pmatrix}
      I_2  &... &0 &0\\
      \vdots &\ddots&\vdots  &\vdots\\
      0 &...  &I_2   &0\\
      0 &... &0 & J_{-1}
\end{pmatrix}^2
\begin{pmatrix}
      J_r &... &0  &0\\
      \vdots &\ddots &\vdots &\vdots\\
      0  &... &J_r &0\\
      0  &...&0 &J_{-r}
\end{pmatrix}^2
\end{equation*}
if $(-r)^n=-1$.\\

Case $\mathfrak{g}=\mathfrak{so}(2n+1)$ ($n\geq2),\mathfrak{sp}(2n)$ ($n\geq3),\mathfrak{so}(4n)$ ($n\geq2),E_7$:\\
By using the fact that 
\begin{itemize}
\item $\mathfrak{sp}(2)=\mathfrak{sl}(2)=\mathfrak{so}(3)$,
\item $\mathfrak{sl}(2)\times \mathfrak{sl}(2)=\mathfrak{so}(4)\subset\mathfrak{so}(5)=\mathfrak{sp}(4)$, 
\item $\prod_1^7\mathfrak{sl}(2)\subset E_7$,
\end{itemize}
there exists a semisimple subalgebra $\mathfrak{h}$ of $\mathfrak{g}$ such that $\mathfrak{h}$ and $\mathfrak{g}$ have same rank and every simple factor of $\mathfrak{h}$ is $\mathfrak{sl}(2)$. Since $G$ is the commutator subgroup of $G_F(F)$, where $G_F$ is a split, simple algebraic group of type $\mathfrak{g}$, there exists a split, semisimple, algebraic subgroup $H_F\subset G_F$ of type $\mathfrak{h}$ (the Zariski closure in $G_F$ of the group generated by $X_\alpha(F)$ for all $\alpha$ belonging to the root subsystem of $\mathfrak{h}$ in the root system of $\mathfrak{g}$) such that $Z(G)\subset Z(H_F)$. Let $m$ be the rank of $G_F$ and $\pi:\widetilde{H}_F\cong\prod_1^m\SL_2\rightarrow H_F$ the universal cover. Since $Z(\widetilde{H}_F)$ surjects on $Z(H_F)$, $Z(\widetilde{H}_F)=\prod_1^mZ(\SL_2)=\prod_1^mZ(\SL_2(F))$, and $-I_2$ is a square, every element of $Z(G)$ is a square in $\pi(\prod_1^m\SL_2(F))=\pi(\prod_1^m[\SL_2(F),\SL_2(F)])\subset [H_F(F),H_F(F)]\subset [G_F(F),G_F(F)]=G$. We are done.\\

Case $\mathfrak{g}=\mathfrak{so}(4n+2)$ ($n\geq2)$:\\
Since $\mathfrak{h}=\mathfrak{so}(6)\times\prod_1^{n-1}\mathfrak{so}(4)$ is a maximal rank semisimple subalgebra of $\mathfrak{g}=\mathfrak{so}(4n+2)$, we find a split, semisimple algebraic subgroup $H_F\subset G_F$ of type $\mathfrak{h}$ such that $Z(G)\subset Z(H_F)$. Let $\pi:\widetilde{H}_F\cong \SL_4\times\prod_1^{n-1}\SL_3\rightarrow H_F$ be the universal cover. Since $Z(G_F(\bar{F}))=\Z/4\Z\subset Z(H_F(\bar{F}))$ and $|Z(\SL_3(\bar{F}))|$ is odd, $\pi$ is injective on $\SL_4$. Since $Z(G)$ is a subgroup of $H_F(F)$ of order a power of $2$, $Z(G)\subset \pi(\SL_4)\cap H_F(F)\cong \SL_4(F)$ by injectivity. Since every element of $\SL_4(F)$ is a product of two squares from above and $\SL_4(F)=[\SL_4(F),\SL_4(F)]\subset [H_F(F),H_F(F)]\subset [G_F(F),G_F(F)]=G$, we are done.
\end{proof}

Let $G$ be a Chevalley group of the form $\SL_2(F)$ for $F$ an infinite field.
If either $m$ or $n$ is congruent to $1$, $2$, or $3$ (mod $4$), then every element of $G$ is of the form $x^m y^n$ since 
$$ \begin{pmatrix} 0&1\\ -1&0\end{pmatrix}^k = \begin{pmatrix} -1&0\\ 0&-1\end{pmatrix}$$ 
if $k\equiv 2$ (mod $4$) (and of course odd powers preserve elements of $Z(G)$).
However, this is not true in general.

\begin{prop}
If $G=\SL_2(F)$ where $F$ is of characteristic zero and  $[F(\zeta_8):F] = 4$ (e.g., $F=\Q$), then $x^4 y^4$ does \emph{not} represent $-I_2$. 
\end{prop}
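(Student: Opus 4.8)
The plan is to show that if $x^4 y^4 = -I_2$ for some $x, y \in \SL_2(F)$, then a field element forced to be an eighth root of unity must lie in $F$, contradicting $[F(\zeta_8):F]=4$. The key observation is that the fourth power map on $\SL_2$ is controlled by the trace: for $g \in \SL_2$, the value $g^4$ depends (up to the Cayley--Hamilton recursion) on $\mathrm{Tr}(g)$, and in particular the eigenvalues of $g^4$ are $\lambda^4, \lambda^{-4}$ where $\lambda, \lambda^{-1}$ are those of $g$. So I would first translate the equation $x^4 y^4 = -I_2$ into the statement $x^4 = -y^{-4}$, and analyze what matrices can arise as fourth powers.

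First I would set $a = x^4$ and $b = y^4$, so the hypothesis is $ab = -I_2$, i.e. $b = -a^{-1}$. Since $a = x^4$, its eigenvalues are $\mu, \mu^{-1}$ with $\mu = \lambda^4$ for $\lambda$ an eigenvalue of $x$; similarly the eigenvalues of $b$ are $\nu, \nu^{-1}$ with $\nu$ a fourth power. The relation $b = -a^{-1}$ forces the eigenvalue multiset $\{\nu, \nu^{-1}\}$ to equal $\{-\mu^{-1}, -\mu\}$, hence $\nu = -\mu^{\pm 1}$. Writing $\mu = \lambda^4$ and $\nu = \eta^4$ (with $\lambda, \eta$ eigenvalues of $x, y$ respectively, a priori in $\bar F$), this gives $\eta^4 = -\lambda^{\mp 4}$, so $(\lambda\eta)^4 = -1$ or $(\lambda/\eta)^4 = -1$. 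In either case some element $\omega \in \bar F$ satisfies $\omega^4 = -1$, making $\omega$ a primitive eighth root of unity, so $\zeta_8 \in \bar F$ is produced. The crux is then to argue that this $\zeta_8$ is actually forced to lie in $F$ itself, which is what contradicts $[F(\zeta_8):F]=4$.

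To pin the eighth root of unity down inside $F$ rather than merely $\bar F$, I would work with traces, which are genuinely $F$-valued. From $x^4 = -y^{-4}$ I get $\mathrm{Tr}(x^4) = -\mathrm{Tr}(y^4) \in F$. Now $\mathrm{Tr}(x^4) = \lambda^4 + \lambda^{-4}$ is a polynomial in $t := \mathrm{Tr}(x) = \lambda + \lambda^{-1} \in F$ (explicitly $t^4 - 4t^2 + 2$), and similarly $\mathrm{Tr}(y^4) = s^4 - 4s^2 + 2$ with $s = \mathrm{Tr}(y) \in F$. The equation becomes $(t^4 - 4t^2 + 2) + (s^4 - 4s^2 + 2) = 0$ with $s, t \in F$. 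The eigenvalue analysis above shows $\lambda^4$ is a primitive eighth root of unity times a sign, so $\mu = \lambda^4$ satisfies $\mu + \mu^{-1} = \pm\sqrt{2}$ (since the real eighth roots of unity condition $\mu^8 = 1$, $\mu^4 = -1$ gives $\mu + \mu^{-1} = \pm\sqrt 2$). Thus $t^4 - 4t^2 + 2 = \pm\sqrt 2$ with $t \in F$, which forces $\sqrt 2 \in F$. But $[F(\zeta_8):F] = 4$ means $\zeta_8 \notin F$ and $\sqrt 2 = \zeta_8 + \zeta_8^{-1} \notin F$ (otherwise $[F(\zeta_8):F]$ would be at most $2$), a contradiction.

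The main obstacle I expect is the bookkeeping in the eigenvalue argument: one must handle the cases where $x$ or $y$ has a repeated eigenvalue ($\mu = \pm 1$, i.e. $x$ not diagonalizable or central) separately, and verify that in those degenerate cases $x^4 y^4 = -I_2$ is either impossible or still forces $\sqrt 2 \in F$. In particular, when $x^4$ or $y^4$ is $\pm I_2$ the product relation collapses and one checks directly that $-I_2$ cannot be written as $x^4 \cdot (\mp I_2)^{\text{stuff}}$ without again producing $\zeta_8 \in F$. Once the generic semisimple case is organized via the trace identities $\mathrm{Tr}(g^4) = \mathrm{Tr}(g)^4 - 4\mathrm{Tr}(g)^2 + 2$, the reduction to $\sqrt 2 \in F$ is purely formal, and the hypothesis $[F(\zeta_8):F] = 4$ is exactly the statement that $\sqrt 2 \notin F$, delivering the contradiction.
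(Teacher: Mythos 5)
Your setup is fine up to the point where you deduce that $(\lambda\eta)^4=-1$ or $(\lambda/\eta)^4=-1$, i.e.\ that $\lambda\eta$ or $\lambda/\eta$ is a primitive eighth root of unity. But the next step contains a genuine error: you assert that ``the eigenvalue analysis above shows $\lambda^4$ is a primitive eighth root of unity times a sign,'' and hence that $\mu+\mu^{-1}=\pm\sqrt2$. This does not follow. The relation constrains only the \emph{product or ratio} of the two eigenvalues $\lambda$ and $\eta$; the individual eigenvalue $\lambda$ is unconstrained. Indeed over $\C$ one may take $x=\mathrm{diag}(\lambda,\lambda^{-1})$ with $\lambda$ arbitrary and $y=\mathrm{diag}(\zeta_8\lambda^{-1},\zeta_8^{-1}\lambda)$; then $x^4y^4=-I_2$ while $\lambda^4$ is not remotely a root of unity. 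Consequently the chain $t^4-4t^2+2=\pm\sqrt2\Rightarrow\sqrt2\in F$ collapses and the proof has no conclusion. What survives of your argument is only the correct but merely necessary trace identity $(t^4-4t^2+2)+(s^4-4s^2+2)=0$ with $s,t\in F$; to finish along these lines you would have to show that this equation, equivalently $(t^2-2)^2+(s^2-2)^2=4$, has no solutions in $F$, a nontrivial Diophantine question you do not address, and even then you would be using only part of the information in the matrix equation.

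The missing idea is the Galois-theoretic one the paper uses: since $x,y\in\SL_2(F)$, the sets $\{\lambda^{\pm1}\}$ and $\{\eta^{\pm1}\}$ are each stable under $\Gal(\bar F/F)$, while (say) $\lambda/\eta=\zeta_8$ and $\Gal(F(\zeta_8)/F)\cong\Z/2\Z\times\Z/2\Z$. The automorphism $\zeta_8\mapsto\zeta_8^3$ must fix or invert each of $\lambda$ and $\eta$, and it cannot do the same thing to both (else it would fix or invert $\zeta_8$); a short computation then shows $\lambda^2$ or $\eta^2$ equals $\pm i$, so one of $\lambda,\eta$ is itself a primitive eighth root of unity --- impossible, since each lies in an extension of $F$ of degree at most $2$ while $[F(\zeta_8):F]=4$. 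Your instinct to exploit $F$-rationality is right in spirit, but the traces of $x$ and $y$ separately do not see the decisive constraint, which lies in how Galois permutes the eigenvalues of $x$ and $y$ \emph{jointly}.
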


\begin{proof} If $A^4 = -B^4$ for elements $A,B\in \SL_2(F)$ and $\lambda^{\pm 1}$ and $\mu^{\pm 1}$ are respectively the eigenvalues of $A$ and $B$,
then without loss of generality we may assume $\lambda/\mu = \zeta_8$, and $\Gal(F(\zeta_8)/F)\cong \Z/2\Z\times \Z/2\Z$ acts on $\{\lambda^{\pm 1}\}$
and $\{\mu^{\pm 1}\}$.  One of the automorphisms $\zeta_8\mapsto\zeta_8^3$ or $\zeta_8\mapsto\zeta_8^7$ fixes exactly one of $\lambda$ and $\mu$,
so either $\lambda^2$ or $\mu^2$ lies in $\{\pm  i\}$.  However, $\lambda$ and $\mu$ lie in quadratic extensions of $F$ and by hypothesis, every primitive $8$-th root of unity generates a degree $4$ extension of $F$.
\end{proof}

\end{document}